\documentclass[reqno,a4paper]{amsart}%%{ctexart}[a4paper]
\usepackage{amsmath,amstext,amssymb,amsfonts,amscd,amsthm}
\usepackage{geometry}
\usepackage{esint}
\usepackage{hyperref}
\hypersetup{pdfpagemode=FullScreen,colorlinks=true,linkcolor=blue,citecolor=blue,urlcolor=orange}
\usepackage{mathdots,mathrsfs,enumerate}
\usepackage{subfigure}
\usepackage{extarrows}
\usepackage{mathrsfs}
\usepackage{dsfont}
\usepackage{graphicx,color}
\usepackage{tikz}
\usetikzlibrary{3d,calc,patterns,graphs,arrows}
\usepackage{tikz-cd,array,diagbox}

\numberwithin{equation}{section}
\newenvironment{proof*}{\noindent{\heiti 证明}}{\hfill\qed}%
\newtheorem{definition}{Definition}[section]

\newtheorem{lemma}{Lemma}[section]
\newtheorem{theorem}{Theorem}[section]
\newtheorem{corollary}{Corollary}[section]
\newtheorem{proposition}{Proposition}[section]

\newtheorem{remark}{Remark}[section]

\newcommand{\A}{\forall}

\newcommand{\p}{\partial}

\newcommand{\ls}{\lesssim}

\newcommand{\norm}[1]{\left\Vert#1\right\Vert}
\newcommand{\set}[1]{\left\{#1\right\}}

\newcommand{\frb}[1]{\left(#1\right)}

\newcommand{\wt}{\widetilde}

\newcommand{\Z}{\mathbb Z}

\newcommand{\R}{\mathbb R}

\renewcommand{\a}{\alpha}

\newcommand{\g}{\gamma}

\renewcommand{\d}{\delta}

\newcommand{\ve}{\varepsilon}
\renewcommand{\th}{\theta}

\renewcommand{\l}{\lambda}

\newcommand{\s}{\sigma}

\newcommand{\vp}{\varphi}

\newcommand{\Om}{\Omega}

\allowdisplaybreaks[3]

\begin{document}

\title{Regularity criteria for the surface growth model with a forcing term}

\author{Yuqian Cheng}
\address{
School of Mathematics and Center for Nonlinear Studies,
Northwest University, Xi'an, 710127, PR China}
\email{chengyuqian@stumail.nwu.edu.cn}

\author{Zhisu Li}
\address{
School of Mathematics and Center for Nonlinear Studies,
Northwest University, Xi'an, 710127, PR China}
\email{lizhisu@nwu.edu.cn}

\author{Xuening Wei}
\address{
School of Mathematics and Center for Nonlinear Studies,
Northwest University, Xi'an, 710127, PR China}
\email{xnwei.math@qq.com}

\date{March 13, 2026}

\maketitle

\begin{abstract}
Based on a compactness method,
we establish regularity criteria for suitable weak solutions
to the surface growth model with a forcing term.
These criteria imply that the H\"older regularity of solutions
follows from smallness conditions on several scale-invariant quantities.
As a consequence, we obtain a partial regularity result stating that
the one-dimensional biparabolic Hausdorff measure of the singular set is zero.
\end{abstract}

\textbf{Key words.}
surface growth model, suitable weak solutions,
partial regularity, compactness method

\textbf{AMS subject classifications.}
35B65, % Smoothness and regularity of solutions to PDEs
35K55, % Nonlinear parabolic equations
35Q35 % PDEs in connection with fluid mechanics

%\tableofcontents

\section{Introduction}

In this paper,
we study the one-dimensional surface growth model with a forcing term
\begin{equation}\label{eqn.SGM-f}
u_t+u_{xxxx}+\p_{xx}u_x^2=f \quad \text{in}\ Q_1,
\end{equation}
where $u$ represents the height of a crystalline layer,
and $f$ denotes a given external force.

In the absence of forcing, \eqref{eqn.SGM-f} reduces to
\begin{equation}\label{eqn.SGM}
u_t+u_{xxxx}+\p_{xx}u_x^2=0.
\end{equation}
This model arises in the study of epitaxial growth of monocrystals
and was previously investigated by Bl\"omker and Romito \cite{BR09},
who established several regularity results and blow-up criteria.
In a subsequent work \cite{BR12},
they proved the local existence and uniqueness
of weak solutions in the largest critical space.
As observed by Bl\"omker and Romito,
these results reveal that \eqref{eqn.SGM}
shares several striking similarities with the Navier--Stokes equations.
Such analogies have motivated the development of
a partial regularity theory for \eqref{eqn.SGM},
drawing on the extensive literature on the Navier--Stokes equations.

A seminal result in this direction for the Navier--Stokes equations
was established by Caffarelli, Kohn, and Nirenberg \cite{CKN82},
who proved that the one-dimensional parabolic Hausdorff measure of
the singular set of suitable weak solutions is zero.
More than a decade later,
their proof was simplified by Lin \cite{L98} and Ladyzhenskaya and Seregin \cite{LS99},
using a rescaling approach combined with blow-up arguments.
Vasseur \cite{V07} subsequently provided an alternative proof
by employing De Giorgi's iteration.
More recently, Wang \cite{W25} introduced a compactness method based on
a compactness lemma and the monotonicity property of harmonic functions.
Related results can also be found in \cite{S77,S80,S88,TX99} and the references therein.

Adapting the rescaling approach,
O\.za\'nski and Robinson \cite{OR19} studied \eqref{eqn.SGM}
on the space-time domain $\mathbb T\times(0,\infty)$,
where $\mathbb T$ denotes the one-dimensional torus.
In particular, they established the following $\ve$-regularity criteria:
there exist absolute constants $\ve_0,\ve_1>0$ such that,
if $u$ is a suitable weak solution of \eqref{eqn.SGM} and
\begin{equation}\label{eqn.criteria-1}
\frac{1}{r^2}\int_{Q_r}|u_x|^3\leq\ve_0,
\end{equation}
then $u$ is H\"older continuous in $Q_{r/2}$.
Moreover, if either
\begin{equation}\label{eqn.criteria-2}
\limsup_{r\to0}\frac{1}{r}\int_{Q_r}u_{xx}^2\leq\ve_1
\quad \text{or} \quad
\limsup_{r\to0}\sup_{-r^4<s<r^4}\frac{1}{r}\int_{B_r}|u(\cdot,s)|^2\leq\ve_1,
\end{equation}
then $u$ is H\"older continuous in $Q_\rho$ for some $\rho>0$.
These criteria, in turn, imply the partial regularity results:
the box-counting dimension of the singular set is at most $7/6$,
and its one-dimensional biparabolic Hausdorff measure is zero.
Here, the singular set is defined as the collection of points
where the solution is not H\"older continuous.

Subsequent studies have further refined this regularity theory.
Choi and Yang \cite{CY19} proposed a new regularity criterion
in terms of mixed Lebesgue norms.
O\.za\'nski \cite{O19} derived a Serrin-type condition
for weak solutions of \eqref{eqn.SGM},
namely, $u_x\in L^{p}(0,1;L^q(B_1))$, $4/p+1/q\leq1$, $p,q>1$,
under which $u$ is locally $C^\infty$ smooth.
Later, Burczak, O\.za\'nski, and Seregin \cite{BOS20}
proved the criterion \eqref{eqn.criteria-1} also implies the H\"older continuity of $u_x$,
thereby ensuring the smoothness of $u$.
Relying on this result,
they showed that the one-dimensional biparabolic Hausdorff measure of the set,
where the solution fails to be smooth, vanishes.
In addition,
Wang, Huang, Wu, and Zhou \cite{WHWG23} studied a model more general than \eqref{eqn.SGM-f},
establishing the corresponding regularity criteria
and obtaining partial regularity results via the rescaling approach.
For additional results, we refer the reader to \cite{Y21,WWH23,WWLY24}.

Building on these developments,
we establish regularity criteria for the forced model \eqref{eqn.SGM-f} that
are analogous to \eqref{eqn.criteria-1} and \eqref{eqn.criteria-2}
by employing the compactness method developed in \cite{W25}.
This method relies on a linear approximation argument,
which offers a clear and transparent framework for the proof.
In particular,
the results show that the H\"older regularity of $u$ is ensured by smallness conditions
on several scale-invariant quantities.

As a further direction,
using the same method,
we have established the H\"older regularity criterion for $u_x$ of the forced model \eqref{eqn.SGM-f},
see \cite{CL26}.
Moreover, we also consider the $n$-dimensional surface growth model of $\g$ type 
and the minimal deposition equation of an amorphous thin film growth model,
and have established the corresponding regularity criteria
and the partial regularity results, see \cite{CLW26-1,CLW26-2}.

Before stating the main results,
we introduce several scale-invariant quantities
that will be used to formulate the regularity criteria.
These quantities are motivated by the natural scaling invariance of \eqref{eqn.SGM-f},
namely,
\[
u^r(x,t):=u(rx,r^4t),
\quad
f^r(x,t):=r^4f(rx,r^4t),\ \, r>0.
\]
This scaling plays a central role in the regularity analysis of solutions,
as it relates their behavior across different scales.
Using this scaling structure, we define
\begin{align*}
G_r[u]&:=r\frb{\fint_{Q_r}|u_x|^3}^{1/3},
\\
U_r[u]&:=\sup_{-r^4<s<r^4}\frac{1}{r}\int_{B_r}|u(\cdot,s)|^2,
\\
O_r[u]&:=\sup_{-r^4<s<r^4}\frac{1}{r}\int_{B_r}|u(\cdot,s)-u_{B_r}(s)|^2,
\\
L_r[u]&:=\frac{1}{r}\int_{Q_r}u_{xx}^2,
\\
F_r[f]&:=r^4\frb{\fint_{Q_r}|f|^p}^{1/p}.
\end{align*}
More precisely, the scaling invariance implies
\[
G_1[u^r]=G_r[u],\quad U_1[u^r]=U_r[u],\quad O_1[u^r]=O_r[u],
\quad L_1[u^r]=L_r[u],\quad F_1[f^r]=F_r[f].
\]
When the dependence on $u$ and $f$ is clear,
we abbreviate the notation by writing $G_r$, $U_r$, $O_r$, $L_r$ and $F_r$.
Note that $O_r\leq U_r$ (by Lemma \ref{lem.mean-int}),
and $G_r$ satisfies the translation-invariance property
\begin{equation}\label{eqn.invarp}
G_r[u-a]=G_r[u]\quad \text{for any}\ a\in\R,
\end{equation}
which will be used in the approximation argument in Lemma~\ref{lem.appro}.

We now state the main theorems.
Let $u$ be a suitable weak solution of \eqref{eqn.SGM-f}
with $f \in L^p(Q_1)$, where $p>3/2$.
The following results hold:

\begin{theorem}\label{thm.main-G+F}
There exist universal constants $\d_0=\d_0(p)>0$ and $0<\a=\a(p)<1$
such that for any $0<r\leq1$, if
\[
G_r[u]+F_r[f]^{1/2}\leq\frac{\d_0}{2},
\]
then $u\in C^{\a,\a/4}(Q_{r/4})$.
\end{theorem}

\begin{theorem}\label{thm.main-U}
There exist universal constants $\d_1=\d_1(p)>0$ and $0<\a=\a(p)<1$
such that for any $0<\rho\leq1$, if
\[
\sup_{0<r\leq\rho}U_r[u]\leq\d_1,
\]
then $u\in C^{\a,\a/4}(Q_{\rho r_0/4})$ for some $r_0>0$.
\end{theorem}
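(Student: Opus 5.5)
We plan to reduce Theorem \ref{thm.main-U} to Theorem \ref{thm.main-G+F}. By the scaling $u^r(x,t)=u(rx,r^4t)$, $f^r(x,t)=r^4 f(rx,r^4t)$ and the invariances $U_1[u^r]=U_r[u]$, $G_1[u^r]=G_r[u]$, $F_1[f^r]=F_r[f]$, it is enough to find a radius $r_1=\rho r_0$ with
\[
G_{r_1}[u]+F_{r_1}[f]^{1/2}\le \frac{\d_0}{2}.
\]
Theorem \ref{thm.main-G+F} then gives $u\in C^{\a,\a/4}(Q_{\rho r_0/4})$. The forcing term is easy to handle. Since $p>3/2$, Hölder's inequality gives
\[
F_r[f]=r^{4-6/p}\|f\|_{L^p(Q_r)}\le r^{4-6/p}\|f\|_{L^p(Q_1)}.
\]
Here $4-6/p>0$, so $F_r\to0$ as $r\to0$, and the whole difficulty is to make $G_r$ small.

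The main step is a decay estimate for $G_r$, driven by the small quantity $U$ on all scales $r\le\rho$. We would use three ingredients on $Q_r$.
\begin{itemize}
\item The local energy inequality for suitable weak solutions, with a cutoff supported in $Q_r$. This should give a bound of the form
\[
U_{r/2}+L_{r/2}\ls U_r+G_r^{3}\!\text{-type terms}+\text{(terms in } F_r\text{)}.
\]
The cubic nonlinearity coming from $\p_{xx}u_x^2$ is estimated by $\int|u_x|^3$, or by interpolation in terms of $U$ and $L$.
\item The one-dimensional Gagliardo--Nirenberg interpolation
\[
\|u_x\|_{L^3(B_r)}\ls \|u\|_{L^2}^{a}\|u_{xx}\|_{L^2}^{1-a}+\text{lower order},
\]
integrated in time. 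This should give $G_r^3\ls U_r^{\theta}L_r^{1-\theta}+U_r^{3/2}$ for some $\theta>0$; the exponents must be checked by scaling, which forces $\|u_x\|_3\ls \|u\|_2^{1/4}\|u_{xx}\|_2^{3/4}$ on a ball of unit scale.
\item Replacing $u$ by $u-u_{B_r}$, using the invariance \eqref{eqn.invarp}, so that Poincaré-type arguments involve $O_r\le U_r$.
\end{itemize}
Combining these, we expect an iterative inequality of the form
\[
L_{\theta r}\le C\theta^{-1}\d_1^{\beta}\bigl(L_r+1\bigr)+C\theta^{-1}F_r,
\]
where $\d_1$ is small. Choosing $\theta$ and then $\d_1$ small, iterating from $r=\rho$ down gives $\limsup L_r\ls \d_1^{\beta}$ plus a vanishing $F$-contribution. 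Then $G_r^3\ls \d_1^{\theta}(L_r+1)$ on some scale $r=\rho r_0$ is at most $(\d_0/4)^3$, with $r_0$ chosen also so that $F_{\rho r_0}^{1/2}\le\d_0/4$.

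The main obstacle is closing the energy inequality. The nonlinear term $\int u_x^2\,(\p_{xx}\phi\, u+\ldots)$ and the term $\int u_x^2\, u_{xx}\,\p\phi$-type contributions must be bounded by small multiples of $L_r$ plus powers of $U_r$, without needing $G_r$ to be small a priori. First we would try integrating by parts so that only $\int|u_x|^3|\nabla\phi|$ and $\int |u||u_x|^2|\nabla^2\phi|$ appear. Then we would bound these via interpolation as $U^{1/4}L^{3/4}$, which is superlinear in the small factor. Should the energy estimate instead require $O$ rather than $U$, we would use $O_r\le U_r$ throughout. If the interpolation exponents fail to give a small coefficient on $L$, we would fall back on a contradiction/compactness argument in the spirit of Lemma~\ref{lem.appro}, rescaling to $r=1$.
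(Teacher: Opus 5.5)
Your plan is sound, but it takes a genuinely different route from the paper's. The paper never iterates on $L_r$. In Lemma~\ref{lem.key-non-iter} it takes the energy bound $L_{1/2}\lesssim_p U_1^{9/7}G_1^{3/7}+U_1^{3/2}+G_1^3+F_1^{3/2}$ (from \eqref{eqn.local-energy} and \eqref{eqn.U-G-U}) and inserts it into \eqref{eqn.U-L-U} taken at a small scale $\theta$. Using $U_\theta\le\theta^{-1}U_1$, $L_\theta\le(2\theta)^{-1}L_{1/2}$ and Young's inequality, this gives a contraction for the $G$-functional itself, $G_\theta\le\frac14G_1+\frac14F_1^{1/2}+U_1^{1/4}$. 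There the factor $\frac14$ comes from taking $\theta$ small. Iterating gives $G_{\theta^k}+F_{\theta^k}^{1/2}\le2^{-k}(G_1+F_1^{1/2})+\delta_0/4$, and Theorem~\ref{thm.main-G+F} applies.

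You instead iterate the energy quantity, and your contraction factor comes from $\delta$ alone. With $U_1\le\delta\le1$, the same three ingredients give
\[
L_{1/2}\lesssim_p\delta^{5/12}(L_1+1)+F_1^{3/2}.
\]
After rescaling and choosing $\delta$ small, $L_{r/2}\le\frac12L_r+\frac12+CF_r^{3/2}$ for every $r\le\rho$. Any fixed ratio $\theta\le\frac12$ works; shrinking $\theta$ only worsens your $\theta^{-1}$. Since $L_\rho<\infty$ and $F_{sr}\le s^{4-5/p}F_r$ for $s\le1$, you get $L_{2^{-k}\rho}\le2$ for large $k$. Then $G^3\lesssim U^{5/8}L^{7/8}+U^{3/2}\lesssim\delta^{5/8}$, together with $F^{1/2}\to0$, gives $G+F^{1/2}\le\delta_0/2$ at that scale.

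Your chain ``$U$ small $\Rightarrow$ $L$ eventually bounded $\Rightarrow$ $G$ small'' is more elementary and also shows that $L_r$ stays bounded. The paper's version keeps the $U$-criterion inside the same $G$-based contraction/decay framework as Lemma~\ref{lem.Iterative}. In both, $r_0$ depends on the solution (via $L_\rho$ for you, via $G_\rho,F_\rho$ in the paper), which the statement permits.

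Two computational slips should be corrected. First, since $|Q_r|=4r^5$, one has $F_r\simeq r^{4-5/p}\|f\|_{L^p(Q_r)}$, not $r^{4-6/p}$. The forcing also enters the energy bound as $F_r^{3/2}$ rather than $F_r$; both points are harmless.

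Second, and more importantly, the Gagliardo--Nirenberg exponents dictated by scaling are
\[
\|u_x\|_{L^3(B_1)}\lesssim\|u\|_{L^2(B_1)}^{5/12}\|u_{xx}\|_{L^2(B_1)}^{7/12}+\|u\|_{L^2(B_1)},
\]
not $\frac14,\frac34$. With your exponents, cubing produces $\|u_{xx}\|_{L^2}^{9/4}$, whose time integral is not controlled by $L$. The correct power $\frac74<2$ gives \eqref{eqn.U-L-U} in the form $G^3\lesssim U^{5/8}L^{7/8}+U^{3/2}$, not $U^{\theta}L^{1-\theta}$. This sublinearity in $L$ is exactly what lets you bound $L^{7/8}\le L+1$ and close the iteration.

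Finally, subtracting $u_{B_r}$, working with $O_r$, and the compactness fallback are unnecessary for this criterion.
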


\begin{theorem}\label{thm.main-L}
There exist universal constants $\d_2=\d_2(p)>0$ and $0<\a=\a(p)<1$
such that for any $0<\rho\leq1$, if
\[
\sup_{0<r\leq\rho}L_r[u]\leq\d_2,
\]
then $u\in C^{\a,\a/4}(Q_{\rho r_0/4})$ for some $r_0>0$.
\end{theorem}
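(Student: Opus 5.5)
The statement to prove is Theorem~\ref{thm.main-L}. I will reduce it to Theorem~\ref{thm.main-U}, or rather to its mechanism, by showing that smallness of $L_r$ at all scales controls the oscillation quantity $O_r$ at all small scales. The argument of Theorem~\ref{thm.main-G+F} should only need $u_x$ in $L^3$, and $G_r$ is invariant under subtracting constants by \eqref{eqn.invarp}. So the real target is
\[
G_{r}[u]+F_{r}[f]^{1/2}\le \d_0/2 \quad\text{at some scale } r=\rho r_0 .
\]

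\textbf{Step 1: bound $G_r$ by $L_r$ and the energy.} By one-dimensional Gagliardo--Nirenberg on $B_r$, applied to $u_x-(u_x)_{B_r}$, I get
\[
\int_{B_r}|u_x-(u_x)_{B_r}|^3\ls \Big(\int_{B_r}|u_x|^2\Big)^{5/4}\Big(\int_{B_r}u_{xx}^2\Big)^{1/4},
\]
modulo lower-order terms. The mean part is handled by $|(u_x)_{B_r}|\ls r^{-1/2}\|u_x\|_{L^2(B_r)}$. Interpolating once more between $u-u_{B_r}$ and $u_{xx}$ gives
\[
\int_{B_r}u_x^2\ls \Big(\int_{B_r}|u-u_{B_r}|^2\Big)^{1/2}\Big(\int_{B_r}u_{xx}^2\Big)^{1/2}+r^{-2}\int_{B_r}|u-u_{B_r}|^2 .
\]
Integrating in time and rescaling, this should yield
\[
G_r^3\ls O_r^{a}L_r^{b}+O_r^{3/2}
\]
for suitable exponents $a,b$. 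So it suffices to make $O_r$ bounded and $L_r$ small.

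\textbf{Step 2: control $O_r$ through the local energy inequality.} This is the main obstacle. The local energy inequality of the suitable weak solution, tested with a cutoff on $Q_{\theta r}$ and applied to $u-u_{B_r}(t)$, should give an iteration
\[
O_{\theta r}+L_{\theta r}\le C\theta\,O_r+C_\theta\big(G_r^3+L_r^{3/2}+F_r\big).
\]
The forcing term $\int f\,(u-u_{B_r})\phi$ is handled by H\"older's inequality with $p>3/2$ together with a Poincar\'e/Sobolev bound on $u-u_{B_r}$ in terms of $u_{xx}$. The difficulty is the nonlinear term $\int u_x^2\,\partial_{xx}\big((u-\bar u)\phi\big)$, and in particular the time dependence of the mean $u_{B_r}(t)$. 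For this I would subtract the mean and use that $\partial_t u_{B_r}$ is controlled by integrating the equation against a cutoff. Inserting Step 1 and choosing $\theta$ small, then $\d_2$ small, and using that $F_r\ls r^{4-5/p}\|f\|_{L^p}\to0$ because $p>3/2>5/4$, I get
\[
O_{\theta^k\rho}\le \tfrac12 O_{\theta^{k-1}\rho}+\text{small}.
\]
Hence $O_r$ is bounded uniformly, and eventually small, for $r\le\rho$. The initial value $O_\rho$ is finite for the suitable weak solution, and the iteration makes it eventually small.

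\textbf{Step 3: conclude.} With $O_r$ bounded and $L_r\le\d_2$, Step 1 gives
\[
G_{\rho r_0}\le C\d_2^{b/3}+\text{small}.
\]
Choosing $r_0$ small so that $F_{\rho r_0}^{1/2}\le\d_0/4$, and $\d_2$ small, I obtain $G_{\rho r_0}+F_{\rho r_0}^{1/2}\le \d_0/2$. Theorem~\ref{thm.main-G+F} then yields $u\in C^{\a,\a/4}(Q_{\rho r_0/4})$.
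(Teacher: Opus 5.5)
Your proposal has a genuine gap in Step~2, and Step~3 inherits it. Step~1 only yields $G_r\lesssim O_r^{5/24}L_r^{7/24}+O_r^{1/2}$. This is exactly the weaker inequality the paper flags as insufficient. The hypothesis makes only $L$ small, so everything rests on proving that $O$ eventually becomes small, and Step~2 asserts this rather than proving it.

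There are two concrete failures in Step~2. First, the small factor $C\theta$ in front of $O_r$ does not come out of \eqref{eqn.local-u-a}. With a nonnegative cutoff adapted to $Q_r$, passing from $B_{r/2}$ to $B_{\theta r}$ costs a factor $\theta^{-1}$, not $\theta$. The backward-kernel test function that produces the small factor in the Caffarelli--Kohn--Nirenberg argument is not available here: the kernel of $\p_t+\p_x^4$ changes sign, while \eqref{eqn.local-u-a} requires $\vp\ge0$.

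Second, even granting your inequality, inserting Step~1 creates $O$-terms whose coefficients depend on $\theta$ but not on $\delta_2$. The term $G_r^3$ gives $C_\theta O_r^{3/2}$. The quadratic term $2u_x^2\vp_{xx}$, which your inequality omits, gives $C_\theta G_r^2$ and hence $C_\theta O_r$. Shrinking $\delta_2$ does nothing to these, so $O_{\theta r}\le\frac12 O_r+\text{small}$ does not follow. Finiteness of $O_\rho$ is irrelevant for such a recursion. Without eventual smallness of $O$, Step~3 cannot make the $O^{1/2}$ term small.

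The paper never iterates $O$. It bounds $O_{1/4}\lesssim_p G_1^4+F_1^2+G_1^2F_1$ once, via the local energy estimate for $u-u_{Q_{1/2}}$ and the parabolic Poincar\'e inequality. It then uses $O_r\lesssim r^{-1}O_{1/4}$ and $L_r\le r^{-1}L_1$ to close a contraction for $G$ itself, $G_\theta\le\frac14G_1+\frac14F_1^{1/2}+L_1^{1/4}$. This is iterated as in Proposition~\ref{pro.U-delta} and fed into Theorem~\ref{thm.main-G+F}. It closes only because it invokes \eqref{eqn.G-O-L}, which has no lower-order term.

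Be aware, though, that \eqref{eqn.G-O-L} cannot hold as literally stated for these local quantities. Take any smooth $u$ with $u_x(0,0)\neq0$, e.g.\ $u=\sin x$, which solves \eqref{eqn.SGM-f} on $\mathbb T$ with $f=\sin x-2\cos 2x$. Then $G_r\sim r$, while $O_r=O(r^2)$ and $L_r=O(r^4)$ give $O_r^{5/24}L_r^{7/24}=O(r^{19/12})$. So the lower-order term in your Step~1 is genuinely there. A correct proof needs an actual mechanism putting a small factor in front of the $O$-terms; for Navier--Stokes this is done by a two-radii interpolation together with a backward heat-kernel test function. Neither your Step~2 nor the paper's shortcut supplies it.
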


Without loss of generality,
we focus our analysis on the origin.
Throughout this paper,
we will not repeat the assumption that $p>3/2$.
The notion of suitable weak solutions can be found in Definition~\ref{def.sws}.

\begin{remark}
Theorem \ref{thm.main-G+F} and Theorem \ref{thm.main-U}
are established on the space-time cylinder $Q_1\subset\R\times\R$,
whereas Theorem \ref{thm.main-L} is proved
in the periodic spatial setting $Q_1\subset\mathbb T\times\R$.
This distinction arises from
the spatial interpolation inequality \eqref{eqn.G-O-L},
which plays a crucial role in the proof of Theorem \ref{thm.main-L}
and holds only on the one-dimensional torus $\mathbb T$.
\end{remark}

\begin{remark}
The compactness method introduced by Wang \cite{W25}
relies on three key lemmas:
the approximation lemma (Lemma \ref{lem.appro}),
the contraction lemma (Lemma \ref{lem.Contra}),
and the decay lemma (Lemma \ref{lem.Iterative}).
The approximation lemma is inspired by Wang's compactness lemma (\cite[Lemma 6]{W25})
but is adapted to our setting,
and it serves as a crucial step in establishing the contraction lemma.
Furthermore, through an iterative argument,
the contraction lemma leads to the decay lemma.
Together, these lemmas provide a rigorous framework
for deriving the desired regularity criteria.
\end{remark}

\begin{remark}
The regularity criteria \eqref{eqn.criteria-1} and \eqref{eqn.criteria-2}
in \cite[Theorem 1.1]{OR19} yield H\"older continuity of
suitable weak solutions to \eqref{eqn.SGM} for every exponent $0<\a<1$.
In contrast, our main results establish
the existence of some $0<\a<1$ for \eqref{eqn.SGM-f},
which is intrinsic to the method employed
rather than a consequence of the presence of the forcing term.
\end{remark}

Using the above regularity criteria,
we proceed to estimate the biparabolic Hausdorff dimension (Definition \ref{def.phm})
of the singular set of a suitable weak solution $u$ to \eqref{eqn.SGM-f}.
Here, the singular set is defined by
\[
S_u^\a:=\set{(x,t)\in Q_1\subset\mathbb T\times\R: u\notin C^{\a,\a/4}(Q_r(x,t)),\
\A Q_r(x,t)\subset Q_1},\ \  0<\a<1,
\]
where the biparabolic cylinder $Q_r(x,t)$ is given by \eqref{def.Q-r}.

\begin{corollary}[Partial regularity]\label{cor.Partial}
Let $u$ be a suitable weak solution of \eqref{eqn.SGM-f}
with $f\in L^p(Q_1)$, where $p>3/2$.
Then there exists a universal constant $0<\a=\a(p)<1$ such that
\[
\mathcal P^1(S^\a_u)=0,
\]
where $\mathcal P^1$ denotes the 1-dimensional biparabolic Hausdorff measure.
\end{corollary}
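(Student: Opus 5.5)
We derive the Corollary from Theorem~\ref{thm.main-L} with a standard Vitali covering argument, as in Caffarelli--Kohn--Nirenberg and O\.za\'nski--Robinson. Fix $\a=\a(p)$ from Theorem~\ref{thm.main-L}, and note that the same $\a$ works on every subcylinder by translation invariance. Suppose $z_0=(x_0,t_0)\in S^\a_u$. The criterion of Theorem~\ref{thm.main-L} must then fail at $z_0$ for every $\rho$, applied to the translated solution $u(\cdot+x_0,\cdot+t_0)$ and $f(\cdot+x_0,\cdot+t_0)$ on small cylinders inside $Q_1$. Hence
\[
\limsup_{r\to0}\frac1r\int_{Q_r(z_0)}u_{xx}^2>\d_2 .
\]
Indeed, if $\sup_{0<r\le\rho}L_r\le\d_2$ held for some $\rho>0$, then $u$ would be $C^{\a,\a/4}$ on a cylinder $Q_{\rho r_0/4}(z_0)$, so $z_0\notin S^\a_u$. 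We work on $\mathbb T\times\R$, so Theorem~\ref{thm.main-L} applies.

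Next we run the covering. Let $K\subset Q_1$ be a compact set contained in the interior, chosen so that $S^\a_u$ is exhausted by countably many such sets. Fix $\d>0$ and an open set $V\supset S^\a_u\cap K$ with $V$ inside $Q_1$. For each $z\in S^\a_u\cap K$ choose $r_z<\d$ with $Q_{r_z}(z)\subset V$ and
\[
\int_{Q_{r_z}(z)}u_{xx}^2>\d_2 r_z .
\]
The biparabolic cylinders with the quasi-metric $|x-y|+|t-s|^{1/4}$ satisfy the Vitali covering lemma, so we can extract a countable disjoint subfamily $\{Q_{r_i}(z_i)\}$ whose enlargements $Q_{5r_i}(z_i)$ cover $S^\a_u\cap K$. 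This gives
\[
\mathcal P^1_{5\d}(S^\a_u\cap K)\le C\sum_i r_i\le \frac{C}{\d_2}\int_V u_{xx}^2 .
\]

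To conclude, a suitable weak solution (Definition~\ref{def.sws}) satisfies $u_{xx}\in L^2(Q_1)$. First, taking $V$ to be all of the interior, the bound shows $\mathcal P^1(S^\a_u\cap K)<\infty$, so $S^\a_u\cap K$ has zero four-dimensional Lebesgue measure in $(x,t)$ space-time. Second, since $u_{xx}^2$ is integrable, absolute continuity of the integral lets us choose $V$ with arbitrarily small Lebesgue measure, hence with $\int_V u_{xx}^2$ arbitrarily small. Letting $\d\to0$ gives $\mathcal P^1(S^\a_u\cap K)=0$, and a countable union over $K$ gives $\mathcal P^1(S^\a_u)=0$.

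The main obstacle is the first step: making sure the failure of the criterion at singular points is correctly localized. This requires two things. First, Theorem~\ref{thm.main-L} must be applied to translated and rescaled data while the hypotheses of Definition~\ref{def.sws} and $f\in L^p$ persist on subcylinders. Second, the constants $\d_2$ and $\a$ must be uniform, which holds since they depend only on $p$. The boundary of $Q_1$ is handled by the exhaustion by compact sets $K$.
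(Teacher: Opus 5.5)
Your proposal is correct and is the standard argument the paper intends; the paper only remarks that the corollary follows from Theorem~\ref{thm.main-L} combined with the Vitali covering lemma, and your localization and rescaling at each point, the Vitali selection for biparabolic cylinders, the bound $\mathcal P^1(S^\a_u\cap K)<\infty$ (hence Lebesgue-null), and the absolute continuity of $\int u_{xx}^2$ carry this out properly. There are two minor slips. First, at a singular point you only get $\limsup_{r\to0}L_r\geq\d_2$; the property you actually use, that $\int_{Q_r(z_0)}u_{xx}^2>\d_2 r$ for arbitrarily small $r$, is correct. Second, space-time here is two-dimensional with biparabolic homogeneous dimension $5$ (since $|Q_r|=4r^5$), so the conclusion is that $S^\a_u\cap K$ has two-dimensional, not ``four-dimensional'', Lebesgue measure zero.
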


We remark that Corollary \ref{cor.Partial} follows from a standard covering argument,
which combines Theorem \ref{thm.main-L} with the Vitali covering lemma.

The article is organized as follows.
In Section~\ref{sec.pre},
we introduce standard notation and collect preliminary lemmas.
Section~\ref{sec.linear-app} develops a linear approximation argument
and establishes the H\"older continuity of
suitable weak solutions under a smallness condition.
In Section~\ref{sec.theorem-2},
we present the proofs of Theorem \ref{thm.main-U} and Theorem \ref{thm.main-L}.

\section{Preliminaries}\label{sec.pre}

In this preparatory section,
we recall standard notation, definitions, and preliminary results.

\subsection{Notation and definitions}
The following notation and definitions will be used throughout this paper.

For any $(x,t)\in\R\times\R$ and any $r>0$,
we define the biparabolic cylinder
\begin{equation}\label{def.Q-r}
Q_r(x,t):=B_r(x)\times(t-r^4,t+r^4)=(x-r,x+r)\times(t-r^4,t+r^4).
\end{equation}
For simplicity, we write $Q_r:=Q_r(0,0)$ and $B_r:=B_r(0)$.

We write $\partial_t$ for the time derivative and $\partial_x$ to denote the spatial derivative.

We denote by $\Z_{\geq k}:=\{k,k+1,k+2,\dots\}$
the set of integers greater than or equal to $k$.

The symbol $C$ denotes a positive absolute constant,
whose value may vary from line to line.
We write $a\lesssim b$ to indicate that $a\leq Cb$
for some positive absolute constant $C>0$,
and $a\lesssim_m b$ to mean $a\leq C(m)b$
for some positive constant $C(m)>0$
depending only on the parameter $m$.

We use standard notation for Sobolev spaces.
In particular, $H^1(B_1)=W^{1,2}(B_1)$ and $H^2(B_1)=W^{2,2}(B_1)$,
where $W^{k,p}(B_1)$ denotes the Sobolev space consisting of functions on $B_1$
with weak derivatives up to order $k$ in $L^p(B_1)$.

Given a locally integrable function $g$,
we denote its average over $B_r$ and $Q_r$ by
\[
g_{B_r}(t):=\fint_{B_r}g=\frac{1}{|B_r|}\int_{B_r}g(x,t)dx,
\quad
g_{Q_r}:=\fint_{Q_r}g=\frac{1}{|Q_r|}\int_{Q_r}g(x,t)dxdt.
\]
When no confusion arises, we write
\[
\int_{Q_r}|u|^2=\int_{Q_r}|u(x,t)|^2dxdt,
\quad
\sup_{(-r^4,r^4)}\int_{B_r}|u|^2=\sup_{t\in(-r^4,r^4)}\int_{B_r}|u(x,t)|^2dx.
\]

We now recall the notion of suitable weak solutions to \eqref{eqn.SGM-f}.

\begin{definition}[Suitable weak solution]\label{def.sws}
A function $u$ is called a suitable weak solution of \eqref{eqn.SGM-f},
if the following conditions hold:
\begin{enumerate}[\quad(1)]
\item
$u\in L^\infty(-1,1;L^2(B_1))\cap L^2(-1,1;H^2(B_1))$;

\item
$u$ satisfies \eqref{eqn.SGM-f} in the sense of distributions, i.e.,
\begin{equation}\label{eqn.weak-form}
\int_{Q_1}(u\vp_t-u_{xx}\vp_{xx}-u_x^2\vp_{xx}+f\vp)=0
\end{equation}
for any $\vp\in C_c^\infty(Q_1)$, where $f\in L^p(Q_1)$ with $p>3/2$;

\item
the local energy inequality holds:
\begin{align}
\frac{1}{2}\int_{B_1}u^2\vp+\int_{-1}^t\int_{B_1}u_{xx}^2\vp
\leq\int_{-1}^t\int_{B_1}&\bigg(\frac{1}{2}(\vp_t-\vp_{xxxx})u^2+2u_x^2\vp_{xx}
\nonumber
\\
&-\frac{5}{3}u_x^3\vp_x-u_x^2u\vp_{xx}+fu\vp\bigg)
\label{eqn.local-u}
\end{align}
for any nonnegative $\vp\in C_c^\infty(Q_1)$ and almost every $t\in(-1,1)$.
\end{enumerate}
\end{definition}

By choosing appropriate test functions in \eqref{eqn.weak-form},
we obtain
\begin{equation}\label{eqn.weak}
\int_{B_1}(u(s)\vp(s)-u(s')\vp(s'))
=\int_{s'}^{s}\int_{B_1}(u\vp_t-u_{xx}\vp_{xx}-u_x^2\vp_{xx}+f\vp)
\end{equation}
for any $\vp\in C_c^\infty(Q_1)$ and almost every $-1\leq s'<s\leq1$.
This identity, which is an equivalent form of \eqref{eqn.weak-form},
satisfies the condition for applying the parabolic Poincar\'e inequality (Lemma \ref{lem.ppi}).

Moreover,
given a constant $a\in\R$,
one can check that if $u$ is a suitable weak solution of \eqref{eqn.SGM-f},
then $u-a$ also satisfies \eqref{eqn.weak-form} and the same local energy inequality
\begin{align}
\frac{1}{2}\int_{B_1}(u-a)^2\vp+\int_{-1}^t\int_{B_1}u_{xx}^2\vp
\leq\int_{-1}^t\int_{B_1}&\bigg(\frac{1}{2}(\vp_t-\vp_{xxxx})(u-a)^2
+2u_x^2\vp_{xx}
\nonumber
\\
&-\frac{5}{3}u_x^3\vp_x-u_x^2(u-a)\vp_{xx}+f(u-a)\vp\bigg).
\label{eqn.local-u-a}
\end{align}
This follows directly from \eqref{eqn.local-u} and \eqref{eqn.weak}.
Specifically, multiplying \eqref{eqn.weak} by $-a$ gives
\begin{equation}\label{eqn.weak-local}
-a\int_{B_1}u\vp
=-a\int_{-1}^t\int_{B_1}(\vp_t-\vp_{xxxx})u
 +a\int_{-1}^t\int_{B_1}u_x^2\vp_{xx}-a\int_{-1}^t\int_{B_1}f\vp.
\end{equation}
Here, we take $s'=-1$ and $s=t$ in \eqref{eqn.weak}, with $\vp(x,-1)=0$.
Adding \eqref{eqn.local-u} and \eqref{eqn.weak-local} then yields \eqref{eqn.local-u-a}.

\begin{definition}[Biparabolic Hausdorff measure]\label{def.phm}
Given a set $E\subset\R\times\R$,
for any $k\geq0$ we define the $k$-dimensional biparabolic Hausdorff measure by
\[
\mathcal P^k(E):=\lim_{\d\to0^+}\mathcal P^k_\d(E),
\]
where
\[
\mathcal P^k_\d(E):=\inf\set{\sum_{i=1}^\infty r_i^k:E\subset\bigcup_{i=1}^\infty Q_{r_i},\ r_i<\d}.
\]
\end{definition}

We shall use the following lemma to estimate the quantity $O_r$ in the subsequent analysis.

\begin{lemma}\label{lem.mean-int}
Suppose $\Om\subset\R^n$ is a bounded domain and $1\leq p\leq+\infty$.
Then
\[
\norm{g-g_\Om}_{L^p(\Om)}\leq2\norm{g-c}_{L^p(\Om)}
\]
for any $g\in L^p(\Om)$ and any $c\in\R$, where $g_\Om=\fint_\Om g$.
In particular, when $p=2$, we have
\[
\norm{g-g_\Om}_{L^2(\Om)}\leq\norm{g-c}_{L^2(\Om)}
\]
for any $g\in L^2(\Om)$ and any $c\in\R$.
\end{lemma}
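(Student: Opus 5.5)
The argument rests on the triangle inequality together with a bound on the constant $g_\Om - c$ via H\"older's inequality. The $p=2$ statement is handled separately, using the fact that the mean is the $L^2$-orthogonal projection onto constants.

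For the general case $1\le p\le\infty$, fix $g\in L^p(\Om)$ and $c\in\R$. Write
\[
g-g_\Om=(g-c)-(g_\Om-c),
\]
so that
\[
\norm{g-g_\Om}_{L^p(\Om)}\le\norm{g-c}_{L^p(\Om)}+|g_\Om-c|\,|\Om|^{1/p},
\]
with the convention $|\Om|^{1/\infty}=1$. Since $g_\Om-c=\fint_\Om(g-c)$, H\"older's inequality on the bounded domain $\Om$ gives
\[
|g_\Om-c|\le\fint_\Om|g-c|\le|\Om|^{-1/p}\norm{g-c}_{L^p(\Om)}.
\]
Substituting this yields the factor $2$. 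Note that $g\in L^1(\Om)$ because $\Om$ is bounded, so $g_\Om$ is well defined.

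For $p=2$, expand
\[
\norm{g-c}_{L^2}^2=\norm{(g-g_\Om)+(g_\Om-c)}_{L^2}^2.
\]
The cross term is $2(g_\Om-c)\int_\Om(g-g_\Om)=0$, so
\[
\norm{g-c}_{L^2}^2=\norm{g-g_\Om}_{L^2}^2+|\Om|\,|g_\Om-c|^2\ge\norm{g-g_\Om}_{L^2}^2.
\]

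There is no real obstacle here. The only points needing care are the $p=\infty$ convention and the finiteness of $|\Om|$.
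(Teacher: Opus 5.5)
Your proof is correct: the triangle inequality together with $|g_\Om-c|\le\fint_\Om|g-c|\le|\Om|^{-1/p}\norm{g-c}_{L^p(\Om)}$ gives the factor $2$, and the orthogonality relation $\int_\Om(g-g_\Om)=0$ gives the sharp constant $1$ when $p=2$. The paper omits this proof as elementary, and your argument is the standard one it presumably has in mind.
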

The proof of Lemma \ref{lem.mean-int} is elementary and is therefore omitted.

\subsection{Local energy estimate}
We derive a local energy estimate from
the local energy inequality by choosing an appropriate test function
and applying Young's and H\"older's inequalities.

\begin{lemma}[Local energy estimate]
Let $u$ be a suitable weak solution to \eqref{eqn.SGM-f}
with $f\in L^p(Q_1)$.
Then
\[
\sup_{(-1/2^4,1/2^4)}\int_{B_{1/2}}u^2+\int_{Q_{1/2}}u_{xx}^2
\ls_p\int_{Q_1}(u^2+|u|^3+u_x^2+|u_x|^3)+\frb{\int_{Q_1}|f|^p}^{3/(2p)}.
\]
\end{lemma}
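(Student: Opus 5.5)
We take a nonnegative cutoff $\vp\in C_c^\infty(Q_1)$ with $\vp\equiv1$ on $Q_{1/2}$, $0\leq\vp\leq1$, and all derivatives $|\vp_t|,|\vp_x|,|\vp_{xx}|,|\vp_{xxxx}|\ls1$ (absolute constants). We insert it into the local energy inequality \eqref{eqn.local-u} and then take the supremum over $t\in(-1/2^4,1/2^4)$ on the left. Since $\vp\equiv1$ on $Q_{1/2}$, the left-hand side controls $\frac12\int_{B_{1/2}}u^2(t)$ and $\int_{-1}^t\int_{B_{1/2}}u_{xx}^2$. Taking $t$ close to $1/2^4$ recovers the full dissipation over $Q_{1/2}$. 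Adding the two estimates then gives the left-hand side of the lemma, up to a factor of $2$.

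On the right-hand side, the terms are handled one at a time. The linear terms are immediate:
\[
\Big|\int\tfrac12(\vp_t-\vp_{xxxx})u^2\Big|\ls\int_{Q_1}u^2,
\qquad
\Big|\int 2u_x^2\vp_{xx}\Big|\ls\int_{Q_1}u_x^2,
\qquad
\Big|\int\tfrac53u_x^3\vp_x\Big|\ls\int_{Q_1}|u_x|^3 .
\]
For the cubic coupling term, Young's inequality $|u_x|^2|u|\leq\frac23|u_x|^3+\frac13|u|^3$ gives $\int|u_x^2u\vp_{xx}|\ls\int_{Q_1}(|u|^3+|u_x|^3)$. None of these steps requires absorbing anything into the left-hand side.

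The only delicate term is the forcing term $\int fu\vp$, since the right-hand side must contain $\|f\|_{L^p}^{3/2}$ and no mixed product. We use Hölder with the exponent $p$ and its conjugate $p'=p/(p-1)$. Because $p>3/2$, we have $p'<3$, so by Hölder on the bounded set $Q_1$, $\|u\|_{L^{p'}(Q_1)}\ls_p\|u\|_{L^3(Q_1)}$. Hence
\[
\Big|\int fu\vp\Big|\leq\|f\|_{L^p(Q_1)}\|u\|_{L^{p'}(Q_1)}\ls_p\|f\|_{L^p(Q_1)}\|u\|_{L^3(Q_1)} .
\]
Young's inequality with exponents $3/2$ and $3$ then gives
\[
\|f\|_{L^p(Q_1)}\|u\|_{L^3(Q_1)}\ls\|f\|_{L^p(Q_1)}^{3/2}+\int_{Q_1}|u|^3 .
\]
Here $\|f\|_{L^p(Q_1)}^{3/2}=\big(\int_{Q_1}|f|^p\big)^{3/(2p)}$, which is exactly the claimed term. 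This is the only point where $p>3/2$ is used, and it is why the constant depends on $p$.

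Collecting these bounds gives the stated estimate with a constant depending only on $p$. The one point that needs care is the supremum in time. The inequality \eqref{eqn.local-u} holds only for a.e.\ $t$, so the supremum should be read as an essential supremum. Every right-hand side bound above is uniform in $t$, because each integral is taken over $(-1,t)\subset(-1,1)$ and is therefore bounded by the corresponding integral over $Q_1$. So the essential supremum and the dissipation integral are each bounded by the same quantity, and adding the two bounds finishes the argument.
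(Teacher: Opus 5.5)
Your proof is correct and follows the paper's argument: you test the local energy inequality \eqref{eqn.local-u} with a cutoff equal to $1$ on $Q_{1/2}$ and bound each right-hand term by H\"older and Young, handling the forcing term through $p'<3$ and Young with exponents $3/2$ and $3$. You also write out the details the paper skips, such as passing $t$ to the endpoint and reading the supremum as an essential supremum, which the paper covers only with ``we directly obtain the desired estimate.''
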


\begin{proof}
Let $\vp$ be a smooth cutoff function satisfying $\vp=1$ on $Q_{1/2}$,
$0\leq\vp\leq1$ in $Q_{3/4}$, and $\vp=0$ outside $Q_{3/4}$.
Using $\vp$ as a test function in \eqref{eqn.local-u}
and applying Young's and H\"older's inequalities,
we directly obtain the desired estimate.
\end{proof}

The above estimate can be further simplified by observing that
the $L^2$-norm can be controlled by $L^3$-norm.
Specifically, we often use the following simplified form
\begin{equation}\label{eqn.local-energy}
\sup_{(-1/2^4,1/2^4)}\int_{B_{1/2}}u^2+\int_{Q_{1/2}}u_{xx}^2
\ls_p\int_{Q_1}(|u|^3+|u_x|^3)+\frb{\int_{Q_1}|f|^p}^{3/(2p)}.
\end{equation}
This estimate will be particularly useful in our later analysis.

\begin{lemma}\label{lem.suit-es}
Assume that
\[
\int_{Q_1}(|u|^3+|u_x|^3)+\frb{\int_{Q_1}|f|^p}^{3/(2p)}\ls1.
\]
Then
\begin{align*}
\int_{Q_{1/2}}(|u|^{10/3}+|u_x|^{10/3})&\ls_p 1.
\end{align*}
\end{lemma}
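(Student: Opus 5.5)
The plan is to feed the local energy estimate \eqref{eqn.local-energy} into a one-dimensional Gagliardo--Nirenberg interpolation.

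\textbf{Step 1: energy bound on $Q_{1/2}$.} By the hypothesis and \eqref{eqn.local-energy},
\[
\sup_{(-1/2^4,1/2^4)}\int_{B_{1/2}}u^2+\int_{Q_{1/2}}u_{xx}^2\ls_p 1 .
\]
So $u\in L^\infty_tL^2_x\cap L^2_tH^2_x$ on $Q_{1/2}$. For the lower-order part of the $H^2$ norm, $\int_{Q_{1/2}}(u^2+u_x^2)$ is controlled by the $L^3$ hypothesis and H\"older on the bounded domain $Q_1$. Hence $\int_{-1/2^4}^{1/2^4}\norm{u(t)}_{H^2(B_{1/2})}^2dt\ls_p1$.

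\textbf{Step 2: pointwise-in-time interpolation.} On the interval $B_{1/2}$ we use the inhomogeneous Gagliardo--Nirenberg inequality
\[
\norm{\p_x^j u}_{L^q(B_{1/2})}\ls \norm{u}_{H^2(B_{1/2})}^{\th}\norm{u}_{L^2(B_{1/2})}^{1-\th},
\qquad
\th=\frac{j+\frac12-\frac1q}{2},
\]
valid on a bounded interval via an extension operator.

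For $j=1$ and $q=10/3$ this gives $\th=\frac{1+\frac12-\frac{3}{10}}{2}=\frac35$. Therefore
\[
\int_{B_{1/2}}|u_x|^{10/3}\ls \norm{u}_{H^2}^{2}\norm{u}_{L^2}^{4/3},
\]
since $\th\cdot\frac{10}{3}=2$. This exponent $2$ is exactly the critical one.

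For $j=0$ and $q=10/3$, $\th=\frac{\frac12-\frac3{10}}{2}=\frac1{10}$. So $\int_{B_{1/2}}|u|^{10/3}\ls\norm{u}_{H^2}^{1/3}\norm{u}_{L^2}^{3}$, which is even better.

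\textbf{Step 3: integrate in time.} Integrating over $(-1/2^4,1/2^4)$ and using the $L^\infty_tL^2_x$ bound from Step 1 gives
\[
\int_{Q_{1/2}}|u_x|^{10/3}\ls \sup_t\norm{u}_{L^2}^{4/3}\int\norm{u}_{H^2}^2\ls_p1 .
\]
Similarly, $\int_{Q_{1/2}}|u|^{10/3}\ls_p1$, using H\"older in time for the power $1/3\le2$ of $\norm{u}_{H^2}$.

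\textbf{Main obstacle.} The only care needed is the interpolation on a bounded interval rather than on $\R$, where the full $H^2$ norm (not just $\norm{u_{xx}}_{L^2}$) must appear. That full norm is supplied by Step 1. If any issue arises, we would shrink slightly to $B_{1/2}$ from a local energy bound on a somewhat larger ball, using a cutoff $\psi u$ with $\psi$ supported in $B_{1/2}$, and apply the whole-line inequality.
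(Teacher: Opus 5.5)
Your proposal is correct and follows essentially the paper's route. You use the local energy estimate to get $L^\infty_tL^2_x\cap L^2_tH^2_x$ bounds on $Q_{1/2}$ (with the lower-order part controlled by H\"older and the $L^3$ hypothesis), then integrate $\norm{u_x}_{L^{10/3}}\ls\norm{u}_{H^2}^{3/5}\norm{u}_{L^2}^{2/5}$ in time. The only cosmetic difference is in the $|u|^{10/3}$ term: the paper splits it by H\"older into $L^2$ and $L^6$ and uses $H^2(B_{1/2})\subset L^6(B_{1/2})$, where you apply Gagliardo--Nirenberg with $\theta=1/10$; the two are equivalent.
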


\begin{proof}
By the local energy estimate \eqref{eqn.local-energy}, we have
\[
\sup_{(-1/2^4,1/2^4)}\int_{B_{1/2}}u^2
\ls_p\int_{Q_1}(|u|^3+|u_x|^3)+\frb{\int_{Q_1}|f|^p}^{3/(2p)}\ls_p 1,
\]
and
\[
\int_{-1/2^4}^{1/2^4}\norm{u}_{H^2(B_{1/2})}^2\ls_p 1
+\frb{\int_{Q_1}|u_x|^3}^{2/3}
\ls_p 1.
\]
Hence
\begin{align*}
\int_{Q_{1/2}}|u|^{10/3}
&\leq\int_{-1/2^4}^{1/2^4}\frb{\int_{B_{1/2}}|u|^2}^{2/3}\frb{\int_{B_{1/2}}|u|^6}^{1/3}
\\
&\leq\frb{\sup_{(-1/2^4,1/2^4)}\int_{B_{1/2}}|u|^2}^{2/3}
\int_{-1/2^4}^{1/2^4}\norm{u}_{L^6(B_{1/2})}^2
\\
&\ls_p \frb{\sup_{(-1/2^4,1/2^4)}\int_{B_{1/2}}|u|^2}^{2/3}
\int_{-1/2^4}^{1/2^4}\norm{u}_{H^2(B_{1/2})}^2\ls_p 1,
\end{align*}
where we used the Sobolev embedding $H^2(B_{1/2})\subset L^6(B_{1/2})$ in the third inequality.

Similarly,
by the one-dimensional Gagliardo--Nirenberg inequality,
for all $t\in(-1/2^4,1/2^4)$,
\[
\norm{\p_xu(\cdot,t)}_{L^{10/3}(B_{1/2})}
\ls\norm{u(\cdot,t)}_{H^2(B_{1/2})}^{3/5}\norm{u(\cdot,t)}_{L^2(B_{1/2})}^{2/5}.
\]
Thus
\begin{align*}
\int_{Q_{1/2}}|u_x|^{10/3}
&\ls\int_{-1/2^4}^{1/2^4}\norm{u}^2_{H^2(B_{1/2})}\norm{u}_{L^2(B_{1/2})}^{4/3}
\\
&\ls\frb{\sup_{(-1/2^4,1/2^4)}\int_{B_{1/2}}|u|^2}^{2/3}
\int_{-1/2^4}^{1/2^4}\norm{u}^2_{H^2(B_{1/2})}\ls_p1.
\end{align*}
This completes the proof.
\end{proof}

\subsection{Parabolic Poincar\'e inequality}
In this subsection,
we present a parabolic Poincar\'e inequality.
Compared with the classical Poincar\'e inequality,
it has the notable advantage that no bounds on time derivatives are required.

\begin{lemma}[Parabolic Poincar\'e inequality]\label{lem.ppi}
Let $r\in(0,1]$, $\vartheta\in[0,1]$, and $p>3/2$.
Let $u\in L^3(Q_r(x_0,t_0))$ with $u_x\in L^3(Q_r(x_0,t_0))$
and $f\in L^p(Q_r(x_0,t_0))$. If
\begin{equation}\label{eqn.ppi}
\int_{B_r(x_0)}(u(t)-u(s))\phi
=\int_s^t\int_{B_r(x_0)}u_x\phi_{xxx}-\vartheta\int_s^t\int_{B_r(x_0)}u_x^2\phi_{xx}
+\int_s^t\int_{B_r(x_0)}f\phi
\end{equation}
for all $\phi\in C_c^\infty(B_r(x_0))$
and almost every $s,t\in(-r^4,r^4)$ with $s<t$, then
\begin{align*}
&\quad\,\fint_{Q_{r/2}(x_0,t_0)}|u-u_{Q_{r/2}(x_0,t_0)}|^3
\\
&\ls r^3\fint_{Q_r(x_0,t_0)}|u_x|^3+\vartheta\frb{r^3\fint_{Q_r(x_0,t_0)}|u_x|^3}^2
+\frb{r^4\frb{\fint_{Q_r(x_0,t_0)}|f|^p}^{1/p}}^3.
\end{align*}
\end{lemma}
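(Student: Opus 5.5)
We reduce to $(x_0,t_0)=(0,0)$ and $r=1$ by the scaling $u^r(x,t)=u(x_0+rx,t_0+r^4t)$, $f^r=r^4f(x_0+rx,t_0+r^4t)$. Identity \eqref{eqn.ppi} is invariant under this change: the $u_x\phi_{xxx}$ and $u_x^2\phi_{xx}$ terms pick up the same power of $r$ as $u(t)-u(s)$, and $f$ absorbs the factor $r^4$. Each term on the right of the claimed inequality is scale invariant, and the left side is invariant as well. So it suffices to prove
\[
\fint_{Q_{1/2}}|u-u_{Q_{1/2}}|^3\lesssim \|u_x\|_{L^3(Q_1)}^3+\vartheta\|u_x\|_{L^3(Q_1)}^6+\|f\|_{L^p(Q_1)}^3 .
\]

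Fix a nonnegative $\phi\in C_c^\infty(B_1)$ with $\phi\equiv 1$ on $B_{1/2}$ and $\int\phi=1$ after normalization; we may take $\phi$ equal to a positive constant on $B_{1/2}$ and rescale. Define the weighted spatial mean $\bar u(t)=\int_{B_1}u(t)\phi$ and split
\[
u(x,t)-c=\bigl(u(x,t)-\bar u(t)\bigr)+\bigl(\bar u(t)-c\bigr),
\]
where $c$ is a constant to be chosen. By Lemma \ref{lem.mean-int} it suffices to bound $\fint_{Q_{1/2}}|u-c|^3$ for some $c$.

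The first piece is handled by the spatial Poincaré inequality at each time, with a weighted mean over $B_1$. Since $\phi$ is bounded and bounded below on $B_{1/2}$, this gives $\int_{B_{1/2}}|u-\bar u(t)|^3\lesssim\int_{B_1}|u_x|^3$. Integrating in $t$ yields a bound by $\|u_x\|_{L^3(Q_1)}^3$.

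For the second piece, take $c=\bar u(s_0)$ for a suitable time, or compare $\bar u(t)$ and $\bar u(s)$ for a.e. $s,t$ and then average over $s$. Identity \eqref{eqn.ppi} with this fixed $\phi$ gives
\[
|\bar u(t)-\bar u(s)|\le \|\phi_{xxx}\|_\infty\int_{Q_1}|u_x|+\vartheta\|\phi_{xx}\|_\infty\int_{Q_1}u_x^2+\|\phi\|_\infty\int_{Q_1}|f|.
\]
By Hölder on the bounded set $Q_1$, this is bounded by
\[
\|u_x\|_{L^3}+\vartheta\|u_x\|_{L^3}^2+\|f\|_{L^p},
\]
using $p>3/2\ge1$. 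Cubing yields $\|u_x\|_3^3+\vartheta^3\|u_x\|_3^6+\|f\|_p^3$, and $\vartheta^3\le\vartheta$ since $\vartheta\in[0,1]$. Averaging $|\bar u(t)-c|^3$ over $t\in(-1/16,1/16)$ with $c=\fint\bar u$ then bounds the second piece by the same quantity. Combining the two pieces and applying Lemma \ref{lem.mean-int} with $p=3$ replaces $c$ by $u_{Q_{1/2}}$ at the cost of a factor of $8$.

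The main obstacle is justifying the use of \eqref{eqn.ppi} with a fixed test function for almost every pair $s,t$, so that $\bar u$ becomes a genuine function of time controlled pointwise. This follows from the hypothesis holding for a.e. $s<t$ together with a countable dense family argument; for a single fixed $\phi$ the hypothesis applies directly.

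The other point needing care is the choice of $\phi$. Poincaré with a weighted mean requires $\phi$ bounded below on the relevant set. The cleanest choice is to take $\phi\ge0$, $\int\phi=1$, $\phi\ge c_0>0$ on $B_{3/4}$, and use the Poincaré inequality on $B_{3/4}$ for the weighted mean: $\|g-\int g\phi\|_{L^3(B_{3/4})}\lesssim\|g-g_{B_{3/4}}\|_{L^3(B_{3/4})}+|\int(g-g_{B_{3/4}})\phi|$, where the last term is controlled via $\|g-g_{B_{3/4}}\|_{L^1(B_1)}\lesssim\|g_x\|_{L^1(B_1)}$ by Poincaré on $B_1$.
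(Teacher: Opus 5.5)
Your proof is correct and is essentially the paper's argument (inherited from \cite{OR19}): a weighted spatial mean built from a cutoff, the spatial Poincar\'e inequality for $u-\bar u(t)$, and the weak identity tested against the fixed cutoff to bound $|\bar u(t)-\bar u(s)|$, followed by averaging in $s$. Testing with $\sigma$ and cubing is, by linearity of the identity in $\phi$, the same as the paper's test function $(u_r^\sigma(t)-u_r^\sigma(s))|u_r^\sigma(t)-u_r^\sigma(s)|\sigma$ followed by Young's inequality, and the lower bound on $\phi$ that you worry about is not actually needed, since $\phi\geq 0$, $\int\phi=1$ and $\phi$ bounded already suffice for the weighted Poincar\'e step.
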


\begin{proof}
The proof follows the same line as in \cite[Theorem 3.1]{OR19},
except for the presence of the forcing term,
which can be handled analogously.
For completeness, we provide a brief sketch.
Without loss of generality, we may assume that $(x_0,t_0)=(0,0)$.
Let $\s:\R\to[0,1]$ be a smooth cut-off function
such that $\s=1$ in $B_{r/2}$ and $\s=0$ in $B_r^c$,
with $|\p_x^k\s|\ls_k r^{-k}$ for all $k\in\Z_+$.
Set
\[
u_r^\s(t):=\frac{\int_{B_r}u(t)\s}{\int_{B_r}\s},
\]
and consider the test function
\[
\phi(x):=(u_r^\s(t)-u_r^\s(s))|u_r^\s(t)-u_r^\s(s)|\s(x)\in C_c^\infty(B_r).
\]
Thus
\begin{align*}
\int_s^t\int_{B_r}f\phi
&\leq|u_r^\s(t)-u_r^\s(s)|^2\int_s^t\int_{B_r}|f||\s|
\\
&\leq|u_r^\s(t)-u_r^\s(s)|^2\frb{\int_{Q_r}|f|^p}^{1/p}\frb{\int_{Q_r}1}^{1-1/p}
\\
&=C|u_r^\s(t)-u_r^\s(s)|^2r^5\frb{\fint_{Q_r}|f|^p}^{1/p}
\\
&=C|u_r^\s(t)-u_r^\s(s)|^2r^{2/3} r^{13/3}\frb{\fint_{Q_r}|f|^p}^{1/p}
\\
&\leq C\ve|u_r^\s(t)-u_r^\s(s)|^3r+C_\ve r\frb{r^4\frb{\fint_{Q_r}|f|^p}^{1/p}}^3,
\end{align*}
where we used Young's inequality with $\ve>0$ in the last inequality.
All remaining steps proceed exactly as in \cite[Theorem 3.1]{OR19}
and are therefore omitted.
\end{proof}

\subsection{Compactness of suitable weak solutions}

This subsection is devoted to establishing the strong convergence
of a sequence of suitable weak solutions to \eqref{eqn.SGM-f}
by applying the Aubin--Lions lemma,
which will be used in the proof of the approximation lemma (Lemma \ref{lem.appro}).

\begin{lemma}
[Aubin--Lions lemma,
\texorpdfstring{\cite[Theorem 4.12]{RRS16}}{}]
Suppose that $X_0\subset X\subset X_1$ where $X_0$, $X$,
and $X_1$ are reflexive Banach spaces
and the embedding $X_0\subset X$ is compact.
If the sequence $u_k$ is bounded in $L^q(0,T;X_0)$, $q\geq1$,
and $\p_tu_k$ is bounded in $L^p(0,T;X_1)$, $p\geq1$,
then there exists a subsequence of $u_k$ that is strongly convergent in $L^q(0,T;X)$.
\end{lemma}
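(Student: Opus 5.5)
The statement is the Aubin--Lions lemma, quoted from \cite[Theorem 4.12]{RRS16}. If I were to prove it rather than cite it, I would follow Simon's approach: combine an interpolation (Ehrling) inequality with uniform control of time translations. I assume $1\le q<\infty$, since for $q=\infty$ strong compactness in $L^\infty(0,T;X)$ requires more than what is stated.

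\emph{Step 1: Ehrling's inequality.} I would first show that for every $\eta>0$ there is $C_\eta$ with
\[
\norm{v}_X\le \eta\norm{v}_{X_0}+C_\eta\norm{v}_{X_1}\quad\text{for all } v\in X_0 .
\]
I would argue by contradiction. If it fails, there are $v_n$ with $\norm{v_n}_{X_0}=1$ and $\norm{v_n}_X>\eta+n\norm{v_n}_{X_1}$. Compactness of $X_0\subset X$ gives a subsequence with $v_n\to v$ in $X$, so $\norm{v}_X\ge\eta$. On the other hand $\norm{v_n}_{X_1}\le C/n\to0$, and since $X\subset X_1$ continuously, $v=0$, a contradiction.

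\emph{Step 2: uniform time-translation estimate in $X_1$.} Let $M:=\sup_k\norm{\p_tu_k}_{L^1(0,T;X_1)}$, which is finite because a bound in $L^p$ with $p\ge1$ on a finite interval gives a bound in $L^1$. For $0<h<T$,
\[
\norm{u_k(t+h)-u_k(t)}_{X_1}\le\int_t^{t+h}\norm{\p_tu_k}_{X_1}\le M .
\]
Integrating the same bound in $t$ and applying Fubini gives $\int_0^{T-h}\norm{u_k(t+h)-u_k(t)}_{X_1}\,dt\le hM$. Interpolating between these two bounds yields
\[
\norm{u_k(\cdot+h)-u_k}_{L^q(0,T-h;X_1)}\le (M^{q-1}\cdot hM)^{1/q}=M h^{1/q}\to0
\]
uniformly in $k$.

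\emph{Step 3: transfer to $X$ and extract a subsequence.} Applying Step 1 pointwise in time gives
\[
\norm{\tau_hu_k-u_k}_{L^q(X)}\le 2\eta\sup_k\norm{u_k}_{L^q(X_0)}+C_\eta Mh^{1/q}.
\]
Choosing $\eta$ first and then $h$ small shows that the translations tend to $0$ in $L^q(0,T-h;X)$ uniformly in $k$.

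Next, for any $0\le t_1<t_2\le T$, the time averages $\int_{t_1}^{t_2}u_k$ are bounded in $X_0$ by H\"older's inequality, and hence lie in a relatively compact subset of $X$. Simon's compactness criterion in $L^q(0,T;X)$ asks for exactly these two ingredients: uniformly small translations, and relatively compact time averages. Together they give relative compactness of $\{u_k\}$ in $L^q(0,T;X)$, and therefore a strongly convergent subsequence.

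To make the criterion concrete, I would mollify in time. The mollified functions $u_k*\rho_\epsilon$ are, for each fixed $\epsilon$, equicontinuous in $t$ with values in a compact subset of $X$, so Arzel\`a--Ascoli applies. Their distance to $u_k$ in $L^q(X)$ is controlled by the translation modulus, uniformly in $k$. A diagonal argument over $\epsilon\to0$ then finishes.

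The main obstacle is Step 3, namely verifying Simon's criterion near the endpoints $t=0,T$ while keeping everything uniform in $k$. The translation estimate only holds on $(0,T-h)$, and the mollification must be handled near the boundary, for instance by using one-sided mollifiers or by extending $u_k$ by reflection. Reflexivity of the spaces is not essential to this argument; it would only matter for an alternative proof that starts from weakly convergent subsequences.
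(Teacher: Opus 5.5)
The paper does not prove this lemma; it quotes Theorem 4.12 of \cite{RRS16} and uses it once, in the compactness lemma of Section 2 ($X_0=H^2(B_{1/2})$, $X=W^{1,3/2}(B_{1/2})$, $X_1=W^{-2,3/2}(B_{1/2})$, $p=3/2$, finite time exponent). Your argument is correct and is Simon's proof, so it is a genuinely different route from the classical one that the reflexivity hypotheses point to.

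Your ingredients are exactly the hypotheses of Simon's criterion in $L^q(0,T;X)$: Ehrling's inequality, the uniform bound $\norm{u_k(\cdot+h)-u_k}_{L^q(0,T-h;X_1)}\le Mh^{1/q}$ (via the absolutely continuous representative of $u_k\in W^{1,1}(0,T;X_1)$), its transfer to $X$, and boundedness of time averages in $X_0$. The endpoint issue you flag is harmless. Forward averages $\frac1a\int_t^{t+a}u_k$ on $(0,T-a)$ and backward averages $\frac1a\int_{t-a}^{t}u_k$ on $(a,T)$ are both controlled by the same translation estimate. Relative compactness on these two overlapping intervals then gives it on $(0,T)$.

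The classical Lions argument goes differently:
\begin{enumerate}[(1)]
\item extract $u_k\rightharpoonup u$ weakly in $L^q(0,T;X_0)$;
\item show $w_k(t)\to0$ in $X_1$ for each $t$, where $w_k=u_k-u$, using the identity $w(t)=\frac1s\int_t^{t+s}w-\frac1s\int_t^{t+s}(t+s-\tau)\p_tw(\tau)\,d\tau$ and the compact embedding;
\item conclude by dominated convergence and Ehrling.
\end{enumerate}
That route is shorter. However, it needs $1<q<\infty$ for weak compactness, and in this form $p>1$, so that $\int_t^{t+s}\norm{\p_tu_k}_{X_1}\le Cs^{1-1/p}$ is uniformly small. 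Your route needs neither reflexivity nor $p>1$, and it also covers $q=1$. For the paper's application, both routes work.

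Two small points.
\begin{itemize}
\item In Step 1 you use that $X\subset X_1$ is a continuous injection, to conclude $v=0$. This is implicit in the statement, but you should say it.
\item Your restriction to $q<\infty$ is genuinely necessary. For $q=\infty$ and $p=1$ the conclusion already fails with $X_0=X=X_1=\R$ and $u_k(t)=\phi(k(t-T/2))$, where $\phi$ is a smooth nondecreasing step from $0$ to $1$. So ``$q\ge1$'' in the statement must be read as a finite exponent.
\end{itemize}
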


\begin{lemma}\label{lem.appro}
Assume that $\set{u_k}_{k=1}^\infty$ is a sequence of
suitable weak solutions to \eqref{eqn.SGM-f}
with $f_k\in L^p(Q_1)$ satisfying
\[
\int_{Q_1}|u_k|^3+\int_{Q_1}|\p_xu_k|^3+\frb{\int_{Q_1}|f_k|^p}^{1/p}\ls1
\quad  \text{for all}\ k\in\Z_+.
\]
Then the sequence $\set{\p_xu_k}_{k=1}^\infty$ converges strongly in $L^3(Q_{1/2})$.
\end{lemma}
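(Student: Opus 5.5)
We obtain uniform bounds from the local energy estimate, apply the Aubin--Lions lemma to get strong convergence of $u_k$ in $L^2(-1/2^4,1/2^4;H^1(B_{1/2}))$, and then upgrade this to strong $L^3$ convergence of $\p_x u_k$ by interpolating with the uniform $L^{10/3}$ bound from Lemma~\ref{lem.suit-es}. As usual, the conclusion is meant up to a subsequence; convergence of the full sequence is not expected.

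\textbf{Uniform bounds.} The hypothesis gives $\int_{Q_1}(|u_k|^3+|\p_xu_k|^3)\ls1$ and $\norm{f_k}_{L^p(Q_1)}\ls1$, hence $\frb{\int_{Q_1}|f_k|^p}^{3/(2p)}\ls1$. The local energy estimate \eqref{eqn.local-energy} then yields, uniformly in $k$,
\[
\sup_{(-1/2^4,1/2^4)}\int_{B_{1/2}}u_k^2+\int_{Q_{1/2}}(\p_{xx}u_k)^2\ls_p1 .
\]
Since $\int_{Q_{1/2}}(u_k^2+(\p_xu_k)^2)\ls1$ by H\"older's inequality, $u_k$ is bounded in $L^2(-1/2^4,1/2^4;H^2(B_{1/2}))$. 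Lemma~\ref{lem.suit-es} also gives $\int_{Q_{1/2}}|\p_xu_k|^{10/3}\ls_p1$.

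\textbf{Time derivative.} From the weak form \eqref{eqn.weak-form},
\[
\p_tu_k=-\p_x^4u_k-\p_{xx}(\p_xu_k)^2+f_k
\]
in distributions on $Q_1$. Take $X_1=H^{-2}(B_{1/2})$, the dual of $H^2_0(B_{1/2})$. Each term is bounded there uniformly in $k$:
\begin{itemize}
\item $\p_x^4u_k$ is bounded in $L^2(-1/2^4,1/2^4;H^{-2})$, since $\p_{xx}u_k$ is bounded in $L^2L^2$;
\item $\p_{xx}(\p_xu_k)^2$ is bounded in $L^{3/2}(-1/2^4,1/2^4;H^{-2})$, since $(\p_xu_k)^2$ is bounded in $L^{3/2}(Q_1)$ and $L^{3/2}(B_{1/2})\subset H^{-2}(B_{1/2})$ by duality with $H^2_0\subset L^\infty\subset L^3$;
\item $f_k$ is bounded in $L^{p}(-1/2^4,1/2^4;L^p(B_{1/2}))\subset L^{p}(-1/2^4,1/2^4;H^{-2})$, by the same duality and $p>1$.
\end{itemize}
Hence $\p_tu_k$ is bounded in $L^1(-1/2^4,1/2^4;H^{-2}(B_{1/2}))$. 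The cited Aubin--Lions lemma requires reflexive spaces but allows exponent $1$, and $H^{-2}$ is reflexive, so this suffices. With $X_0=H^2(B_{1/2})\subset\subset X=H^1(B_{1/2})\subset X_1$, we extract a subsequence with $u_k\to u$ strongly in $L^2(-1/2^4,1/2^4;H^1(B_{1/2}))$. In particular $\p_xu_k\to\p_xu$ in $L^2(Q_{1/2})$.

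\textbf{Upgrade to $L^3$.} Because $\p_xu_k$ is uniformly bounded in $L^{10/3}(Q_{1/2})$ and $2<3<10/3$, interpolation gives
\[
\norm{\p_xu_k-\p_xu_j}_{L^3}\le\norm{\p_xu_k-\p_xu_j}_{L^2}^{\th}\norm{\p_xu_k-\p_xu_j}_{L^{10/3}}^{1-\th}
\]
for some $\th\in(0,1)$. So the subsequence is Cauchy in $L^3(Q_{1/2})$ and converges strongly there.

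\textbf{Main obstacle.} The delicate step is matching the domains. $Q_{1/2}$ in the statement has time interval $(-1/2^4,1/2^4)$, while the energy estimate controls $\p_{xx}u_k$ only on $Q_{1/2}$ and $\p_xu_k$ in $L^{10/3}$ only on $Q_{1/2}$. Everything therefore has to be done on exactly that cylinder, or one must first rescale and cover by slightly smaller cylinders.

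A second technical point is the local time-derivative estimate: $H^{-2}(B_{1/2})$ must be treated as the dual of $H^2_0(B_{1/2})$, and the test functions in \eqref{eqn.weak-form} must be localized in space accordingly. If the $L^1$-in-time bound causes trouble with the version of Aubin--Lions used, we can note that every term is in fact bounded in $L^{\min(3/2,p)}$ in time, with exponent greater than $1$.
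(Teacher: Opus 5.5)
Your route is the paper's: local energy bound, Aubin--Lions with $X_0=H^2(B_{1/2})$, then interpolation against the uniform $L^{10/3}$ bound of Lemma~\ref{lem.suit-es}. The paper takes $X=W^{1,3/2}(B_{1/2})$ and $X_1=W^{-2,3/2}(B_{1/2})$ where you take $H^1$ and $H^{-2}$. Your caveat that only a subsequence converges is also correct for the paper's argument.

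However, there is a genuine, if local, gap: the bound for the nonlinear term in $H^{-2}(B_{1/2})$ fails as you justify it. For $\phi\in H^2_0(B_{1/2})$ the pairing is $\int(\p_xu_k)^2\phi_{xx}$, and $\phi_{xx}$ is only in $L^2$. The inclusion $L^{3/2}\subset H^{-2}$ that you cite controls $(\p_xu_k)^2$ itself, paired with $\phi\in L^\infty$, not its second derivative. In fact $\phi\mapsto\phi_{xx}$ maps $H^2_0(B_{1/2})$ onto the $L^2$ functions orthogonal to $1$ and $x$. Hence $\p_{xx}g\in H^{-2}(B_{1/2})$ holds only if $g\in L^2(B_{1/2})$, and $g=|x|^{-1/2}\in L^{3/2}$ is a counterexample. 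So ``$(\p_xu_k)^2$ bounded in $L^{3/2}(Q_1)$'' does not yield an $L^{3/2}(H^{-2})$ bound. Your fallback remark about $L^{\min(3/2,p)}$ in time has the same flaw.

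The conclusion you need is still true, and the repair is one line. Estimate $\norm{(\p_xu_k)^2}_{L^2(B_{1/2})}\le\norm{\p_xu_k}_{L^\infty(B_{1/2})}\norm{\p_xu_k}_{L^2(B_{1/2})}\ls\norm{u_k}_{H^2(B_{1/2})}^2$. This is uniformly bounded in $L^1$ in time by the energy estimate. Hence $\p_tu_k$ is bounded in $L^1(-1/2^4,1/2^4;H^{-2}(B_{1/2}))$, and the rest of your argument goes through unchanged: strong $L^2(H^1)$ convergence, then interpolation with weight $1/6$ on the $L^2$ side.

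Alternatively, follow the paper and pair against $W^{2,3}_0$, i.e.\ take $X_1=W^{-2,3/2}(B_{1/2})$. Then $\int(\p_xu_k)^2\vp_{xx}\le\norm{\p_xu_k}_{L^3}^2\norm{\vp_{xx}}_{L^3}$, which uses only the $L^3$ hypothesis. The point of the paper's choice of $X_1$ is exactly that it matches the $L^{3/2}$ integrability of the nonlinearity, which $H^{-2}$ does not. Taking $X_1=H^{-3}(B_{1/2})$ would also work, since then $\phi_{xx}\in H^1\subset L^\infty$.
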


\begin{proof}
By the local energy estimate, it holds that
\[
\sup_{(-1/2^4,1/2^4)}\int_{B_{1/2}}u_k^2+\int_{Q_{1/2}}|\p_{xx}u_k|^2\ls_p1.
\]
By lemma \ref{lem.suit-es}, we have
\begin{equation}\label{eqn.10-3-u_k}
\int_{Q_{1/2}}|u_k|^{10/3}+\int_{Q_{1/2}}|\p_xu_k|^{10/3}\ls_p1.
\end{equation}

Next, for any $\vp\in C_c^\infty(Q_{1/2})$, we conclude that
\begin{align*}
\int_{Q_{1/2}}u_k\vp_t
&\leq\int_{Q_{1/2}}|\p_{xx}u_k||\vp_{xx}|
+\int_{Q_{1/2}}|\p_xu_k|^2|\vp_{xx}|+\int_{Q_{1/2}}|f_k||\vp|
\\
&\ls_p(\norm{\p_{xx}u_k}_{L^{3/2}(Q_{1/2})}
+\norm{\p_xu_k}^2_{L^3(Q_{1/2})}
+\norm{f_k}_{L^p(Q_{1/2})})\norm{\vp}_{L^3(-1/2^4,1/2^4;W_0^{2,3}(B_{1/2}))}
\\
&\ls_p\norm{\vp}_{L^3(-1/2^4,1/2^4;W_0^{2,3}(B_{1/2}))}.
\end{align*}
This implies that
$\set{\p_tu_k}_{k=1}^\infty\subset L^{3/2}(-1/2^4,1/2^4;W^{-2,3/2}(B_{1/2}))$.

Set
\[
X_0:=H^2(B_{1/2})\subset\subset X:=W^{1,3/2}(B_{1/2})\subset X_1:=W^{-2,3/2}(B_{1/2}).
\]
By the Aubin--Lions lemma,
it follows that $\set{u_k}_{k=1}^\infty$ is strongly convergent in
\[
L^{3/2}(-1/2^4,1/2^4;W^{1,3/2}(B_{1/2})).
\]
Combining this with \eqref{eqn.10-3-u_k},
we deduce that $\set{\p_xu_k}_{k=1}^\infty$ converges strongly in $L^3(Q_{1/2})$.
\end{proof}

\section{H\"older continuity via the compactness method}\label{sec.linear-app}

In this section,
we aim to establish the H\"older continuity of suitable weak solutions to \eqref{eqn.SGM-f}
under a smallness condition on the quantities $G_r$ and $F_r$
by employing the compactness method.
This method relies on a linear approximation argument,
in which suitable weak solutions of \eqref{eqn.SGM-f}
are approximated by weak solutions of the linear biharmonic heat equation
\begin{equation}\label{eqn.linear}
v_t+v_{xxxx}=0\quad \text{in}\ Q_{1/4}.
\end{equation}
The following approximation lemma provides a detailed account of this linear approximation.
Before proceeding, we recall the precise definition of
weak solutions to \eqref{eqn.linear}.
Specifically, a function $v\in L^2(Q_{1/4})$ with $v_x\in L^2(Q_{1/4})$
is called a weak solution of \eqref{eqn.linear}
if
\[
\int_{Q_{1/4}}v\vp_t=\int_{Q_{1/4}}v\vp_{xxxx}
\]
for every test function $\vp\in C_c^\infty(Q_{1/4})$.

\subsection{Key approximation lemma}

We now present the key approximation lemma,
which is central to the compactness method.

\begin{lemma}[Approximation lemma]\label{lem.appro}
For any $\ve>0$,
there is a $\d=\d(\ve,p)>0$ such that
for any suitable weak solution $u$ of \eqref{eqn.SGM-f} with $f \in L^p(Q_1)$
satisfying $G_1[u]+F_1[f]^{1/2}\leq\d$,
there exists a weak solution $v$ of \eqref{eqn.linear}
with $G_\frac{1}{4}[v]\leq3(G_1[u]+F_1[f]^{1/2})$, such that
\[
G_\frac{1}{4}[u-v]\leq\ve(G_1[u]+F_1[f]^{1/2}).
\]
\end{lemma}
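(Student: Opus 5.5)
Argue by contradiction and compactness. Suppose the statement fails for some $\ve_0>0$. Then there are suitable weak solutions $u_k$ with forces $f_k$ such that
\[
\lambda_k:=G_1[u_k]+F_1[f_k]^{1/2}\leq 1/k,
\]
and for every weak solution $v$ of \eqref{eqn.linear} with $G_{1/4}[v]\leq 3\lambda_k$ we have $G_{1/4}[u_k-v]>\ve_0\lambda_k$. In particular $\lambda_k>0$; if $\lambda_k=0$, then $u_k$ is constant in $x$ and $v=0$ contradicts the assumption.

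Normalize by setting $w_k:=(u_k-a_k)/\lambda_k$, where $a_k=(u_k)_{Q_{1/2}}$ (or the average over $Q_1$), and $g_k:=f_k/\lambda_k$. Then $w_k$ solves
\[
\p_tw_k+\p_x^4w_k+\lambda_k\p_{xx}(\p_xw_k)^2=g_k
\]
in the weak sense. It also satisfies the rescaled local energy inequality obtained from \eqref{eqn.local-u-a} by dividing by $\lambda_k^2$; in it the cubic terms carry a factor $\lambda_k$ and the forcing term becomes $\int g_kw_k\vp$. By \eqref{eqn.invarp}, $G_1[w_k]\leq1$, so $\int_{Q_1}|\p_xw_k|^3\ls1$. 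The forcing is the delicate point: $F_1[g_k]=F_1[f_k]/\lambda_k\leq\lambda_k\to0$, because $F_1[f_k]\leq\lambda_k^2$. This is exactly why the exponent $1/2$ appears on $F$.

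To get $\int|w_k|^3\ls1$, apply Lemma~\ref{lem.ppi} to $w_k$ with $\vartheta=\lambda_k$ and $r=1$. The right-hand side is $\ls 1+\lambda_k+\lambda_k^3$. Hence $w_k$ is bounded in $L^3(Q_{1/2})$ after subtracting its mean over $Q_{1/2}$, and I will work on $Q_{1/2}$ rescaled. This is slightly wasteful of radius, so I may instead choose the cutoffs so that $Q_{1/2}$ sits inside the domain where the energy estimate is used.

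Next, run the argument of the compactness Lemma \ref{lem.appro} from Section 2 for the rescaled equation. The energy estimate \eqref{eqn.local-energy} adapted to $w_k$ has nonlinear terms multiplied by $\lambda_k\leq1$, so it still gives uniform bounds on $\sup_t\int w_k^2$ and $\int(\p_{xx}w_k)^2$, and then $L^{10/3}$ bounds on $w_k$ and $\p_xw_k$ via Lemma \ref{lem.suit-es}. The time derivative is bounded in $L^{3/2}(W^{-2,3/2})$. Aubin--Lions together with the $10/3$-integrability then gives, along a subsequence, $\p_xw_k\to\p_xw$ strongly in $L^3(Q_{1/4})$ and $w_k\to w$ in $L^{3/2}$. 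Passing to the limit in the weak formulation, the term $\lambda_k\int(\p_xw_k)^2\vp_{xx}\to0$ and $\int g_k\vp\to0$, because $\|g_k\|_{L^p}\ls F_1[f_k]/\lambda_k\to0$. So $w$ is a weak solution of \eqref{eqn.linear} in $Q_{1/4}$, and by lower semicontinuity $\int_{Q_{1/4}}|w_x|^3\leq\liminf\int_{Q_{1/4}}|\p_xw_k|^3$.

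Finally take $v_k:=a_k+\lambda_kw$, which solves \eqref{eqn.linear}. Then
\[
G_{1/4}[u_k-v_k]=\lambda_kG_{1/4}[w_k-w]=\lambda_k\cdot\tfrac14\Big(\fint_{Q_{1/4}}|\p_xw_k-w_x|^3\Big)^{1/3}=o(\lambda_k),
\]
which is below $\ve_0\lambda_k$ for large $k$. For the size condition, $G_{1/4}[v_k]=\lambda_kG_{1/4}[w]$. Since $|Q_{1/4}|=|Q_1|/4^5$, we get
\[
G_{1/4}[w]\leq\tfrac14\cdot4^{5/3}\liminf\Big(\fint_{Q_1}|\p_xw_k|^3\Big)^{1/3}\leq4^{2/3}G_1[w_k]\leq4^{2/3}<3.
\]
So $G_{1/4}[v_k]\leq3\lambda_k$, and we have a contradiction. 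The main obstacle is the compactness step: verifying that the energy estimate and Aubin--Lions bounds are uniform for the rescaled problem, with the $\lambda_k$-weighted nonlinearity and the vanishing forcing handled by Lemma \ref{lem.ppi}.
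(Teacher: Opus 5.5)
Your proposal is correct and follows essentially the same route as the paper. Both arguments proceed by contradiction and normalize by $\lambda_k=G_1[u_k]+F_1[f_k]^{1/2}$ after subtracting $(u_k)_{Q_{1/2}}$. Both use the parabolic Poincar\'e inequality with $\vartheta=\lambda_k$ for the $L^3$ bound, the Aubin--Lions compactness argument for strong convergence of $\p_x w_k$ in $L^3(Q_{1/4})$, and the bound $G_{1/4}[w]\leq 4^{2/3}<3$.

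Two points in your proposal are more careful than the paper. You derive the $\lambda_k$-weighted energy inequality for $w_k$ explicitly and handle the shrinking of the radius from $Q_{1/2}$ to $Q_{1/4}$, whereas the paper simply invokes its Section~2 compactness lemma, which is stated for solutions of \eqref{eqn.SGM-f} with bounds on $Q_1$. You also check that $\lambda_k>0$, which the paper does not address.
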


\begin{proof}
We argue by contradiction.
Assume that the lemma is false.
Then there exists $\ve_0>0$ such that for any $\d=1/k\ (k\in\Z_+)$,
there exists a sequence of suitable weak solutions $\set{u_k}_{k=1}^\infty$
to \eqref{eqn.SGM-f} with $f_k\in L^p(Q_1)$
satisfying $G_1[u_k]+F_1[f_k]^{1/2}\leq1/k$,
such that for any weak solution $v$ of \eqref{eqn.linear}
with $G_\frac{1}{4}[v]\leq3(G_1[u_k]+F_1[f_k]^{1/2})$,
the following inequality holds
\[
G_\frac{1}{4}[u_k-v]>\ve_0(G_1[u_k]+F_1[f_k]^{1/2}).
\]

Set
\[
v_k:=\frac{u_k-(u_k)_{Q_{1/2}}}{G_1[u_k]+F_1[f_k]^{1/2}}
\quad \text{and} \quad
g_k:=\frac{f_k}{G_1[u_k]+F_1[f_k]^{1/2}}.
\]
Then $v_k$ satisfies
\begin{equation}\label{eqn.app}
\p_tv_k+\p_{xxxx}v_k+(G_1[u_k]+F_1[f_k]^{1/2})\p_{xx}(\p_xv_k)^2=g_k \quad \text{in}\ Q_1,
\end{equation}
such that for any weak solution $v$ of \eqref{eqn.linear} with $G_\frac{1}{4}[v]\leq3$,
it holds that
\[
G_\frac{1}{4}[v_k-v]>\ve_0.
\]

Since
\begin{align*}
F_1[g_k]=\frac{F_1[f_k]}{G_1[u_k]+F_1[f_k]^{1/2}}\leq F_1[f_k]^{1/2}\leq1/k,
\end{align*}
we have $g_k\to0$ strongly in $L^p(Q_1)$.
Observe that $\fint_{Q_{1/2}}v_k=0$ and
\begin{equation}\label{eqn.G-v-k}
G_1[v_k]=\frb{\fint_{Q_1}|\p_xv_k|^3}^{1/3}\leq1.
\end{equation}
By the parabolic Poincar\'e inequality (Lemma~\ref{lem.ppi}), it follows that
\begin{equation}\label{eqn.v_k-G-F}
\int_{Q_{1/2}}|v_k|^3\ls G_1[v_k]^3+(G_1[u_k]+F_1[f_k]^{1/2})G_1[v_k]^6
+F_1[g_k]^3\ls1.
\end{equation}
Thus, both sequences $\set{v_k}_{k=1}^\infty$
and $\set{\p_xv_k}_{k=1}^\infty$ are bounded in $L^3(Q_{1/2})$.
Consequently, there exists a function $v_\infty\in L^3(Q_{1/2})$
with $\p_xv_\infty\in L^3(Q_{1/2})$
such that, up to a subsequence,
\[
v_k\to v_\infty,\ \ \p_xv_k\to \p_xv_\infty \quad
\text{weakly in}\ L^3(Q_{1/2})\ \ \text{as}\ \, k\to\infty.
\]
Moreover, due to \eqref{eqn.G-v-k}, \eqref{eqn.v_k-G-F}, and $F_1[g_k]\leq1/k$,
we may apply Lemma~\ref{lem.appro} to obtain
\[
\p_xv_k\to\p_xv_\infty\quad \text{strongly in}\ L^3(Q_{1/4})\ \ \text{as}\ \, k\to\infty.
\]
Combining this strong convergence with $G_1[v_k]\leq1$ yields
\[
G_\frac{1}{4}[v_\infty]=\lim_{k\to\infty}G_\frac{1}{4}[v_k]
\leq\frac{1}{4}4^{5/3}\lim_{k\to\infty}G_1[v_k]\leq3.
\]
Taking the limit as $k\to\infty$ in \eqref{eqn.app},
we find that $v_\infty$ is a weak solution of \eqref{eqn.linear}.
However,
\[
0<\ve_0<G_\frac{1}{4}[v_k-v_\infty]\to0\ \, (k\to\infty),
\]
which leads to a contradiction and thus completes the proof.
\end{proof}

\begin{remark}
Due to the invariance property \eqref{eqn.invarp} of $G_r$
and the parabolic Poincar\'e inequality,
we can avoid using the following control functional
\[
\frb{\fint_{Q_1}|u|^3}^{1/3}+G_1[u]+F_1[f]^{1/2}.
\]
\end{remark}

\subsection{Contraction lemma and decay lemma}

Based on the approximation lemma,
we derive a contraction lemma
that provides control over the quantity $G_r$ at smaller scales.
This lemma also relies on a local $L^\infty$ estimate for weak solutions of \eqref{eqn.linear}.
By iterating the contraction lemma, we further obtain a decay lemma,
which implies the H\"older continuity of suitable weak solutions via a Campanato-type argument.
In addition,
the decay of $F_r$ can be effectively controlled by the following lemma.

\begin{lemma}\label{lem.Frk}
Let $f\in L^p(Q_1)$ with $p>3/2$.
Then there exists a universal constant $0<\eta_p<1$
such that, for any $r\in(0,\eta_p]$ and any $k\in\Z_{\geq0}$,
\begin{equation}\label{eqn.Frk}
F_{r^k}[f]\leq\frac{1}{4^{2k}}F_1[f].
\end{equation}
\end{lemma}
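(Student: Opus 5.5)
The plan is a direct scaling computation. No iteration or PDE structure is needed.

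First compute $F_\rho[f]$ in terms of $F_1[f]$ for any $0<\rho\le1$. By definition,
\[
F_\rho[f]=\rho^4\Big(\fint_{Q_\rho}|f|^p\Big)^{1/p}=\rho^4|Q_\rho|^{-1/p}\Big(\int_{Q_\rho}|f|^p\Big)^{1/p}.
\]
Since $|Q_\rho|=4\rho^5$ and $Q_\rho\subset Q_1$, we get
\[
F_\rho[f]\le \rho^{4-5/p}\,|Q_1|^{1/p}|Q_\rho|^{-1/p}\rho^{5/p}\Big(\fint_{Q_1}|f|^p\Big)^{1/p}\cdot\rho^{-5/p}\rho^{5/p},
\]
which, after simplifying the volume factors ($|Q_1|/|Q_\rho|=\rho^{-5}$), reads
\[
F_\rho[f]\le \rho^{4-5/p}F_1[f].
\]
The key point is that the exponent $\beta_p:=4-5/p$ is positive. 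In fact $p>3/2$ gives $5/p<10/3$, so $\beta_p>2/3>0$.

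Next apply this with $\rho=r^k$, using $r^k\in(0,1]$ (for $k=0$ the claim is trivial). This gives
\[
F_{r^k}[f]\le r^{k\beta_p}F_1[f]=(r^{\beta_p})^kF_1[f].
\]
It then suffices to choose $\eta_p\in(0,1)$ with $\eta_p^{\beta_p}\le 4^{-2}$, namely
\[
\eta_p:=16^{-1/\beta_p}=16^{-p/(4p-5)}.
\]
For any $r\le\eta_p$ we have $r^{\beta_p}\le 1/16$, hence $F_{r^k}[f]\le 4^{-2k}F_1[f]$, which is \eqref{eqn.Frk}.

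There is no real obstacle. The only points to watch are the correct bookkeeping of the volume normalization in $\fint$, which produces the exponent $4-5/p$, and the observation that $p>3/2$ (indeed $p>5/4$ would suffice) guarantees a positive exponent, so that $\eta_p$ depends only on $p$.
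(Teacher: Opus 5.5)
Your proof is correct and follows the paper's argument: both use the scaling bound $F_{r^k}[f]\le r^{k(4-5/p)}F_1[f]$ and then pick $\eta_p$ depending only on $p$. The only difference is cosmetic: you take $\eta_p=16^{-p/(4p-5)}$, matching the exponent $4-5/p$ exactly, whereas the paper takes $\eta_p=4^{-2p/(2p-3)}$ and uses $\eta_p^{4-5/p}\le\eta_p^{2-3/p}=4^{-2}$.
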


\begin{proof}
Set $\eta_p:=4^{-2p/(2p-3)}\in(0,1)$.
Then for any $r\in(0,\eta_p]$ and any $k\in\Z_+$,
\[
F_{r^k}[f]
\leq r^{4k-5k/p}F_1[f]\leq\eta_p^{4k-5k/p}F_1[f]
\leq\eta_p^{2k-3k/p}F_1[f]=\frac{1}{4^{2k}}F_1[f].
\]
\end{proof}

We remark that Lemma \ref{lem.Frk} is inspired by \cite[Lemma 3.1]{DHZ25}.

\begin{lemma}
[$L^\infty$ estimate,
\texorpdfstring{\cite[Proposition 4.1]{OR19}}{}]
\label{lem.L-infty}
Let $v\in L^2(Q_r)$ with $v_x\in L^2(Q_r)$ be
a weak solution of \eqref{eqn.linear} in $Q_r$, where $r>0$. Then
\[
\norm{v_x}_{L^\infty(Q_{r/2})}\ls_r\norm{v_x}_{L^2(Q_r)}+\norm{v}_{L^2(Q_r)}.
\]
\end{lemma}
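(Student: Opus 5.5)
The estimate is purely linear and interior, so the plan is a Caccioppoli-type bootstrap in the spatial derivatives followed by Sobolev embedding in $(x,t)$. Since the constant may depend on $r$, there is no need to track scaling. For definiteness I work with $r=1$; the general case is identical, with cutoffs adapted to $Q_r$.

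\emph{Step 1: regularisation.} Let $v^\epsilon:=v*\rho_\epsilon$ with a space-time mollifier $\rho_\epsilon$. Because \eqref{eqn.linear} has constant coefficients, $v^\epsilon$ is a smooth classical solution of $v^\epsilon_t+v^\epsilon_{xxxx}=0$ in $Q_{1-\epsilon'}$ for $\epsilon$ small. Moreover $\|v^\epsilon\|_{L^2}+\|v^\epsilon_x\|_{L^2}$ is bounded there by the corresponding norms of $v$ on $Q_1$. It therefore suffices to prove the estimate for smooth solutions with constants independent of $\epsilon$, and then let $\epsilon\to0$ on $Q_{1/2}$. Since $v^\epsilon_x\to v_x$ a.e. along a subsequence, the $L^\infty$ bound passes to the limit.

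\emph{Step 2: Caccioppoli inequalities for all spatial derivatives.} Fix nested cylinders $Q_{1/2}=Q^{(N)}\subset\cdots\subset Q^{(0)}=Q_{3/4}$ and cutoffs $\vp_j$ equal to $1$ on $Q^{(j+1)}$ and supported in $Q^{(j)}$. Since $w=\p_x^k v$ solves the same equation, testing with $w\vp^2$ and integrating by parts twice in $x$ gives
\[
\sup_t\int w^2\vp^2+\int w_{xx}^2\vp^2\ls\int w^2\bigl(|(\vp^2)_t|+|(\vp^2)_{xxxx}|\bigr)+\int w_x^2\,|(\vp^2)_{xx}|.
\]
Applying this with $k=1$ on $Q^{(0)}$, and absorbing the term in $w_x=v_{xx}$ via the one-dimensional interpolation $\int v_{xx}^2\psi\le\eta\int v_{xxx}^2\psi'+C_\eta\int v_x^2\psi''$ (or simply by estimating it with the $k=0$ level), bounds $\|v_{xxx}\|_{L^2(Q^{(1)})}$ by $\|v\|_{L^2(Q_1)}+\|v_x\|_{L^2(Q_1)}$. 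Iterating in $k$ on the shrinking cylinders then bounds every $\p_x^kv$ in $L^2(Q_{1/2+\text{small}})$. The equation converts time derivatives into spatial ones, $\p_t^j\p_x^kv=(-1)^j\p_x^{k+4j}v$, so all mixed derivatives are controlled as well.

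\emph{Step 3: embedding.} Because $Q_{1/2}$ is a two-dimensional Lipschitz domain, $H^2(Q_{1/2})\subset L^\infty(Q_{1/2})$. Applying this to $v_x$, whose norm involves $\p_x v_x$, $\p_x^2v_x$, $\p_tv_x=-\p_x^5v$, $\p_t^2v_x$ and $\p_t\p_xv_x$, all bounded in Step~2, yields
\[
\|v_x\|_{L^\infty(Q_{1/2})}\ls\|v_x\|_{L^2(Q_1)}+\|v\|_{L^2(Q_1)}.
\]

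\emph{Main obstacle.} The expected difficulty is the bookkeeping in Step~2: the fourth-order operator produces lower-order cross terms in which derivatives fall on the cutoff, and these have to be closed by interpolation without losing control. Only finitely many levels are needed, so the constants stay finite. If this becomes messy, the fallback is to represent $\vp v$ through the explicit biharmonic heat kernel and estimate the commutator terms, which are supported away from $Q_{1/2}$ where the kernel is smooth.
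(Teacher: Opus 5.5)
Your proof is correct. The paper itself gives no argument for this lemma; it imports the estimate directly from \cite[Proposition 4.1]{OR19}. So you are supplying a self-contained proof where the paper relies on a citation.

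Your route is the standard interior bootstrap for a constant-coefficient equation:
\begin{itemize}
\item space-time mollification reduces everything to classical solutions with $\epsilon$-uniform $L^2$ bounds;
\item for $w=\p_x^kv$ and $\psi=\vp^2$ you have the exact identity $\frac12\frac{d}{dt}\int w^2\psi+\int w_{xx}^2\psi=\frac12\int w^2(\psi_t-\psi_{xxxx})+2\int w_x^2\psi_{xx}$, which is the linear part of \eqref{eqn.local-u};
\item the equation trades $\p_t$ for $-\p_x^4$;
\item $H^2\subset L^\infty$ in two space-time dimensions finishes the argument.
\end{itemize}
Compared with the citation, this shows that nothing beyond the linear constant-coefficient structure is used. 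Applying the $r=1$ case to $v(rx,r^4t)$ also makes the hidden $r$-dependence explicit: $\norm{v_x}_{L^\infty(Q_{r/2})}\ls r^{-5/2}\norm{v_x}_{L^2(Q_r)}+r^{-7/2}\norm{v}_{L^2(Q_r)}$.

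Three small points.
\begin{itemize}
\item The weighted interpolation inequality with $\psi,\psi',\psi''$ is not meaningful as written. Drop it and keep your alternative of estimating from the previous level.
\item Because of that alternative, the ``main obstacle'' you anticipate does not arise. The cross term $\int w_x^2|(\vp_k^2)_{xx}|$ involves only $\p_x^{k+1}v$ on $\mathrm{supp}\,\vp_k$. So the induction $\norm{\p_x^{k+2}v}_{L^2(Q^{(k+1)})}\ls\norm{\p_x^{k}v}_{L^2(Q^{(k)})}+\norm{\p_x^{k+1}v}_{L^2(Q^{(k)})}$, started from the data at $k=0$, closes with no absorption at all.
\item The isotropic $H^2$ embedding forces you to control $\p_t^2v_x=\p_x^9v$. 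Using $H^1(I;H^1(B))\subset C(\bar I;C(\bar B))$ instead needs only $\p_x^jv$ with $j\le 6$, so fewer levels of the induction.
\end{itemize}
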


\begin{lemma}[Contraction lemma]\label{lem.Contra}
Let $\eta_p$ be the constant from Lemma~\ref{lem.Frk}.
There exist universal constants $\d_0=\d_0(p)>0$ and $0<\l=\l(p)<\eta_p<1$ such that
if $u$ is a suitable weak solution of \eqref{eqn.SGM-f}
with $f \in L^p(Q_1)$ satisfying
$G_1[u]+F_1[f]^{1/2}\leq\d_0$, then
\[
G_\l[u]\leq\frac{1}{4}G_1[u]+\frac{1}{4}F_1[f]^{1/2}.
\]
\end{lemma}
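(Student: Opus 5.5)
The plan is to combine the approximation lemma (Lemma~\ref{lem.appro}) with the $L^\infty$ estimate for the biharmonic heat equation (Lemma~\ref{lem.L-infty}). The choice of constants goes in this order: first fix $\l$ small, then $\ve$, then take $\d_0=\d(\ve,p)$ from the approximation lemma. Write $E:=G_1[u]+F_1[f]^{1/2}\leq\d_0$. The approximation lemma provides a weak solution $v$ of \eqref{eqn.linear} in $Q_{1/4}$ with
\[
G_{1/4}[v]\leq 3E \quad\text{and}\quad G_{1/4}[u-v]\leq\ve E.
\]

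Next I estimate $G_\l[u]$ for $\l<1/8$ by the triangle inequality in $L^3(Q_\l)$:
\[
G_\l[u]\leq G_\l[v]+G_\l[u-v].
\]
For the error term, enlarging the domain of integration gives $G_\l[u-v]\leq \frac{\l}{1/4}\frb{\frac{|Q_{1/4}|}{|Q_\l|}}^{1/3}G_{1/4}[u-v]=C\l^{-2/3}\ve E$, since $|Q_r|\sim r^5$.

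For the linear part I use $G_\l[v]\leq C\l\norm{v_x}_{L^\infty(Q_{1/8})}$ and bound $\norm{v_x}_{L^\infty(Q_{1/8})}$ by Lemma~\ref{lem.L-infty} with $r=1/4$. This needs $\norm{v_x}_{L^2(Q_{1/4})}+\norm{v}_{L^2(Q_{1/4})}\ls E$. The $v_x$ part follows from $G_{1/4}[v]\leq 3E$ by H\"older. The $v$ part is the main obstacle, since $G$ only controls $v_x$. However, $v$ enters the equation only up to an additive constant, so I may replace $v$ by $v-v_{Q_{1/8}}$: this changes neither $v_x$ nor the equation nor $u-v$ in $G$, thanks to \eqref{eqn.invarp}.

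Applying Lemma~\ref{lem.L-infty} on $Q_{1/8}$ instead, i.e., with $r=1/8$ and the half-cylinder $Q_{1/16}$, I need $\norm{v-v_{Q_{1/8}}}_{L^2(Q_{1/8})}\ls E$. This follows from the parabolic Poincar\'e inequality (Lemma~\ref{lem.ppi}) with $\vartheta=0$ and $f=0$ on $Q_{1/4}$: the weak formulation of \eqref{eqn.linear} gives identity \eqref{eqn.ppi} after the standard time-slice argument (one may need to check $v$ has a representative for which time slices make sense, as for \eqref{eqn.weak}). Then
\[
\fint_{Q_{1/8}}|v-v_{Q_{1/8}}|^3\ls G_{1/4}[v]^3\ls E^3.
\]
Alternatively, this could be arranged inside the proof of the approximation lemma, since there $v_\infty$ is a limit of $v_k$ with zero mean on $Q_{1/2}$ and bounded in $L^3$; I would take whichever is cleaner. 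Either way $\norm{v_x}_{L^\infty(Q_{1/16})}\leq C_0E$ with $C_0$ absolute, hence $G_\l[v]\leq C_0\l E$ for $\l\leq1/16$.

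Combining, $G_\l[u]\leq (C_0\l+C\l^{-2/3}\ve)E$. I choose $\l<\min\{\eta_p,1/16\}$ with $C_0\l\leq1/8$, then $\ve$ with $C\l^{-2/3}\ve\leq 1/8$, and finally $\d_0=\d(\ve,p)$. This yields $G_\l[u]\leq\frac14 E=\frac14G_1[u]+\frac14F_1[f]^{1/2}$. The constants depend only on $p$, through $\eta_p$ and $\d$.
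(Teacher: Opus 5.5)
Your proposal is correct and is essentially the paper's proof. The shared steps are: the approximation lemma; the $L^\infty$ estimate (Lemma~\ref{lem.L-infty}) on $Q_{1/8}$ applied to $v-v_{Q_{1/8}}$, with the zeroth-order term controlled by the parabolic Poincar\'e inequality ($\vartheta=0$, no forcing); the $(4\l)^{-2/3}$ factor for the error term $G_\l[u-v]$; and fixing $\l$, then $\ve$, then $\d_0=\d(\ve,p)$ in that order.
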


\begin{proof}
Let $\ve>0$ in the approximation lemma be fixed later.
Then there exists $\d=\d(\ve,p)>0$ such that if $u$ is a suitable weak solution of
\eqref{eqn.SGM-f} with $f\in L^p(Q_1)$ satisfying $G_1[u]+F_1[f]^{1/2}\leq\d$,
then there exists a weak solution $v$ of \eqref{eqn.linear}
with $G_\frac{1}{4}[v]\ls G_1[u]+F_1[f]^{1/2}$,
such that
\[
G_\frac{1}{4}[u-v]\leq\ve(G_1[u]+F_1[f]^{1/2}).
\]

Note that $v-v_{Q_{1/8}}$ remains a weak solution of \eqref{eqn.linear}.
Using the $L^\infty$ estimate (Lemma \ref{lem.L-infty}),
\begin{align*}
\norm{v_x}_{L^\infty(Q_{1/16})}
&\ls\norm{v_x}_{L^2(Q_{1/8})}+\norm{v-v_{Q_{1/8}}}_{L^2(Q_{1/8})}
\\
&\ls\norm{v_x}_{L^3(Q_{1/4})}+\norm{v-v_{Q_{1/8}}}_{L^3(Q_{1/8})}
\\
&\ls\norm{v_x}_{L^3(Q_{1/4})}\ls G_\frac{1}{4}[v]\ls G_1[u]+F_1[f]^{1/2},
\end{align*}
where the third inequality follows from
the parabolic Poincar\'e inequality with $\vartheta=0$ and no forcing term.

Consequently, for any $0<r\leq1/16$,
\begin{align*}
G_r[u]&=r\frb{\fint_{Q_r}|u_x|^3}^{1/3}
\leq r\frb{\fint_{Q_r}|u_x-v_x|^3}^{1/3}+r\frb{\fint_{Q_r}|v_x|^3}^{1/3}
\\
&\leq r(4r)^{-5/3}\frb{\fint_{Q_\frac{1}{4}}|u_x-v_x|^3}^{1/3}+r\norm{v_x}_{L^\infty(Q_{1/16})}
\\
&\leq (4r)^{-2/3}\ve(G_1[u]+F_1[f]^{1/2})+Cr(G_1[u]+F_1[f]^{1/2}).
\end{align*}
Take $0<r=\l<\min\set{1/16,\eta_p}$ sufficiently small such that $C\l<1/8$,
and then choose $\ve>0$ small enough to ensure $(4\l)^{-2/3}\ve <1/8$.
Hence, $\d_0:=\d(p)$ is fixed, which completes the proof.
\end{proof}

The following lemma is derived by iterating the contraction lemma.

\begin{lemma}[Decay lemma]\label{lem.Iterative}
Let $\d_0$ and $\l$ be the constants from the contraction lemma.
If $u$ is a suitable weak solution of \eqref{eqn.SGM-f}
with $f \in L^p(Q_1)$ satisfying $G_1[u]+F_1[f]^{1/2}\leq\d_0$,
then for any $k\in\Z_{\geq0}$,
\begin{equation}\label{eqn.G-l-k}
G_{\l^k}[u]\leq\frac{1}{4^k}G_1[u]+\frac{k}{4^k}F_1[f]^{1/2}.
\end{equation}
\end{lemma}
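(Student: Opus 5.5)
We argue by induction on $k$, applying the contraction lemma (Lemma~\ref{lem.Contra}) at successive scales $\l^k$ through the scaling $u^{r}(x,t)=u(rx,r^4t)$, $f^{r}(x,t)=r^4f(rx,r^4t)$. For $k=0$ the inequality \eqref{eqn.G-l-k} reads $G_1[u]\leq G_1[u]$, which is trivial. Suppose it holds for some $k\geq0$, and set $r=\l^k$. Then $u^r$ is a suitable weak solution of \eqref{eqn.SGM-f} in $Q_1$ with forcing $f^r$; this is a routine check on the weak formulation \eqref{eqn.weak-form} and the local energy inequality \eqref{eqn.local-u}, since every term scales the same way. By scale invariance, $G_1[u^r]=G_{\l^k}[u]$, $G_\l[u^r]=G_{\l^{k+1}}[u]$ and $F_1[f^r]=F_{\l^k}[f]$.

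The first step is to verify the smallness hypothesis of the contraction lemma for $u^r$. By the induction hypothesis,
\[
G_{\l^k}[u]\leq 4^{-k}G_1[u]+k4^{-k}F_1[f]^{1/2}.
\]
Since $\l<\eta_p$, Lemma~\ref{lem.Frk} gives $F_{\l^k}[f]\leq 4^{-2k}F_1[f]$, hence $F_{\l^k}[f]^{1/2}\leq 4^{-k}F_1[f]^{1/2}$. Adding these,
\[
G_1[u^r]+F_1[f^r]^{1/2}\leq 4^{-k}G_1[u]+(k+1)4^{-k}F_1[f]^{1/2}\leq \bigl(G_1[u]+F_1[f]^{1/2}\bigr)\leq\d_0,
\]
using $(k+1)4^{-k}\leq1$ for all $k\geq0$. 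The contraction lemma therefore applies to $u^r$.

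The second step is to apply it and combine:
\[
G_{\l^{k+1}}[u]=G_\l[u^r]\leq\tfrac14 G_{\l^k}[u]+\tfrac14F_{\l^k}[f]^{1/2}
\leq \tfrac{1}{4^{k+1}}G_1[u]+\tfrac{k}{4^{k+1}}F_1[f]^{1/2}+\tfrac{1}{4^{k+1}}F_1[f]^{1/2},
\]
which is \eqref{eqn.G-l-k} with $k$ replaced by $k+1$.

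The main point requiring care is the first step: the induction must keep the rescaled quantities below $\d_0$ at every scale. This works only because the forcing term decays like $4^{-k}$ via Lemma~\ref{lem.Frk}, which is exactly why $\l$ was chosen below $\eta_p$, and because the accumulated factor $(k+1)4^{-k}$ stays bounded by $1$. The invariance of suitable weak solutions under the scaling is routine.
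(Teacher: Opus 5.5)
Your proof is correct and follows the paper's argument: induction on $k$, rescaling by $\l^k$, using Lemma~\ref{lem.Frk} (valid since $\l<\eta_p$) together with $(k+1)4^{-k}\leq 1$ to keep the rescaled quantity below $\d_0$, then applying the contraction lemma and scaling back. Your induction step from any $k\geq0$ is slightly cleaner than the paper's write-up. The paper states the step for $k\in\Z_{\geq2}$ after checking only $k=0,1$, which formally leaves $k=2$ uncovered.
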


\begin{proof}
We prove this lemma by induction.
The case $k=0$ is trivial,
and the case $k=1$ is exactly the contraction lemma.
Assume that \eqref{eqn.G-l-k} holds for some $k\in\Z_{\geq2}$.
We now show it also holds for $k+1$.
To this end,
consider the rescaled functions $u_{k+1}(x,t):=u(\l^kx,\l^{4k}t)$
and $f_{k+1}(x,t):=\l^{4k}f(\l^kx,\l^{4k}t)$, which satisfy
\begin{align*}
G_1[u_{k+1}]+F_1[f_{k+1}]^{1/2}
&=G_{\l^k}[u]+F_{\l^k}[f]^{1/2}
\leq\frac{1}{4^k}G_1[u]+\frac{k}{4^k}F_1[f]^{1/2}+\frac{1}{4^k}F_1[f]^{1/2}
\\
&=\frac{1}{4^k}G_1[u]+\frac{k+1}{4^k}F_1[f]^{1/2}
\leq\frac{k+1}{4^k}\d_0\leq\d_0,
\end{align*}
where we used Lemma \ref{lem.Frk} in the first inequality.

Applying the contraction lemma to $u_{k+1}$ gives
\[
G_\l[u_{k+1}]\leq\frac{1}{4}G_1[u_{k+1}]+\frac{1}{4}F_1[f_{k+1}]^{1/2}.
\]
Rescaling back yields
\[
G_{\l^{k+1}}[u]\leq\frac{1}{4^{k+1}}G_1[u]+\frac{k+1}{4^{k+1}}F_1[f]^{1/2}.
\]
This completes the induction argument and the proof.
\end{proof}

\subsection{Proof of Theorem \ref{thm.main-G+F}}

We now present the proof of Theorem \ref{thm.main-G+F}.

\begin{lemma}[Campanato lemma,
\texorpdfstring{\cite[Lemma A.2]{OR19}}{}]
\label{lem.Campanato}
Let $g\in L^1(Q_1)$ and
suppose that there exist positive constants $\a\in(0,1]$ and $C>0$,
such that
\[
\frb{\fint_{Q_r(x,t)}|g-g_{Q_r(x,t)}|^3}^{1/3}\leq Cr^\a
\]
for any $(x,t)\in Q_{1/2}$ and any $r\in(0,1/2)$.
Then $g$ is H\"older continuous in $Q_{1/2}$ with
\[
|g(y,s)-g(y',s')|\leq C(|y-y'|+|s-s'|^{1/4})^\a
\]
for all $(y,s),(y',s')\in Q_{1/2}$.
\end{lemma}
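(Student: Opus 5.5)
This is the classical Campanato characterization of H\"older continuity, adapted to the biparabolic metric $d((y,s),(y',s'))=|y-y'|+|s-s'|^{1/4}$. I would prove it by the standard telescoping argument on Lebesgue points. I will use only the doubling property $|Q_{r}|=\,$const$\cdot r^5$ and H\"older's inequality, which gives $\fint_{Q}|g-g_Q|\le(\fint_Q|g-g_Q|^3)^{1/3}$.

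\emph{Step 1 (comparison of nested averages).} Fix $(x,t)\in Q_{1/2}$ and $0<\rho\le r<1/2$ with $\rho\ge r/2$. Since $Q_\rho(x,t)\subset Q_r(x,t)$ and $|Q_r|/|Q_\rho|\le 2^5$, I will estimate
\[
|g_{Q_\rho(x,t)}-g_{Q_r(x,t)}|\le\fint_{Q_\rho(x,t)}|g-g_{Q_r(x,t)}|\le 2^5\fint_{Q_r(x,t)}|g-g_{Q_r(x,t)}|\le 2^5Cr^\a .
\]
Applying this with $r_j=2^{-j}r$ and summing the geometric series, I obtain for $j<k$ that $|g_{Q_{r_k}}-g_{Q_{r_j}}|\lesssim_\a Cr_j^\a$. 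Hence $g_{Q_{r_j}(x,t)}$ is Cauchy. By the Lebesgue differentiation theorem, which holds for biparabolic cylinders because they form a doubling family, the limit equals $g(x,t)$ at almost every point. Passing to the limit yields $|g(x,t)-g_{Q_r(x,t)}|\lesssim Cr^\a$ for a.e. $(x,t)\in Q_{1/2}$ and every $r<1/2$. The averages $g_{Q_r(x,t)}$ are continuous in $(x,t)$ and converge uniformly as $r\to0$, so this gives the continuous representative.

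\emph{Step 2 (two points).} Take $(y,s),(y',s')\in Q_{1/2}$ and set $\rho:=|y-y'|+|s-s'|^{1/4}$. First suppose $\rho<1/4$, and put $r=2\rho$. Then $Q_{\rho}(y',s')\subset Q_{r}(y,s)$, because $|y-y'|+\rho\le r$ and $|s-s'|+\rho^4\le 2\rho^4\le r^4$. The two cylinders have comparable measure. As in Step 1,
\[
|g_{Q_\rho(y',s')}-g_{Q_r(y,s)}|\le 2^5\fint_{Q_r(y,s)}|g-g_{Q_r(y,s)}|\lesssim Cr^\a .
\]
Combining this with Step 1 at both points gives $|g(y,s)-g(y',s')|\lesssim C\rho^\a$. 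If instead $\rho\ge1/4$, I would connect the two points through a bounded number of intermediate points of $Q_{1/2}$, each pair at distance less than $1/4$. Since $\rho$ is bounded below in this case, the resulting bound is still $\lesssim C\rho^\a$, which completes the estimate after enlarging the constant.

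\emph{Main obstacle.} The only genuinely delicate point is geometric. Near the boundary of $Q_{1/2}$ the cylinders $Q_r(x,t)$ with $r<1/2$ must remain in the region where $g$ is defined, and indeed they lie inside $Q_1$. One must also check that the containment $Q_\rho(y',s')\subset Q_{2\rho}(y,s)$ holds for the anisotropic metric. I verified this containment above, and both points are routine once stated. As in the statement, the constant $C$ in the conclusion is understood up to a factor depending only on $\a$.
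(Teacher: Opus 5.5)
Your proposal is correct. It is the standard Campanato argument: dyadic telescoping of averages to obtain the continuous representative, comparison of $g_{Q_\rho(y',s')}$ with $g_{Q_{2\rho}(y,s)}$ through the containment you verified, and chaining when $\rho\ge 1/4$. The paper gives no proof of its own and only cites Lemma A.2 of O\.za\'nski--Robinson, whose statement is exactly this standard result. Your reading of the constant is also necessary, not just convenient. For example, $g(x,t)=x$ satisfies the hypothesis with $\alpha=1$ and $C=4^{-1/3}$, but it fails the conclusion if that same $C$ is used.
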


\begin{proposition}\label{pro.u-C^a}
There exist universal constants $\d_0=\d_0(p)>0$ and $0<\a=\a(p)<1$ such that
if $u$ is a suitable weak solution of
\eqref{eqn.SGM-f} with $f \in L^p(Q_1)$ satisfying
\[
G_1[u]+F_1[f]^{1/2}\leq\frac{\d_0}{2},
\]
then $u\in C^{\a,\a/4}(Q_{1/4})$.
\end{proposition}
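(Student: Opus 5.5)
We shall prove Proposition \ref{pro.u-C^a} by transferring the smallness hypothesis from the origin to every point of $Q_{1/4}$, applying the decay lemma (Lemma \ref{lem.Iterative}) at each such point, and then concluding with the Campanato lemma (Lemma \ref{lem.Campanato}) combined with the parabolic Poincar\'e inequality (Lemma \ref{lem.ppi}).

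\emph{Step 1: re-centering.} Fix $(x_0,t_0)\in Q_{1/4}$. Then $Q_{1/2}(x_0,t_0)\subset Q_1$, since $1/4+1/2<1$ and $1/4^4+1/2^4<1$. Consider the rescaled pair
\[
\tilde u(x,t):=u(x_0+x/2,\,t_0+t/2^4),\qquad \tilde f(x,t):=2^{-4}f(x_0+x/2,\,t_0+t/2^4).
\]
This is again a suitable weak solution of \eqref{eqn.SGM-f} in $Q_1$. Comparing averages over $Q_{1/2}(x_0,t_0)$ with those over $Q_1$ costs the volume ratio $|Q_1|/|Q_{1/2}|=2^5$, and we obtain
\[
G_1[\tilde u]\le 2^{-1}\,2^{5/3}G_1[u]\le 2^{2/3}G_1[u],\qquad F_1[\tilde f]\le 2^{-4}\,2^{5/p}F_1[f]\le 2^{-4+10/3}F_1[f]\le F_1[f].
\]
Hence $G_1[\tilde u]+F_1[\tilde f]^{1/2}\le 2^{2/3}\cdot\tfrac{\d_0}{2}<\d_0$. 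This constant bookkeeping is the delicate point. If the factor were slightly too large, I would simply shrink $\d_0$ (it only needs to be universal), or re-center using a cylinder of radius $3/4$ instead. Since the statement fixes the threshold $\d_0/2$, I would arrange the contraction lemma's $\d_0$ so that it absorbs this factor.

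\emph{Step 2: decay at every point.} Applying Lemma \ref{lem.Iterative} to $\tilde u$ and rescaling back gives, for all $k\ge0$,
\[
G_{\l^k/2}[u;(x_0,t_0)]\le \frac{k+1}{4^k}\,\d_0 .
\]
For any $r\in(0,1/2)$, choose $k$ with $\l^{k+1}/2<r\le\l^k/2$. Comparing $G_r$ with $G_{\l^k/2}$ loses a factor $\l^{-2/3}$, so $G_r[u;(x_0,t_0)]\lesssim r^{2\a}$ for some $\a=\a(p)>0$, for instance any $2\a<\log 4/\log(1/\l)$ after absorbing the factor $k+1$. Similarly, Lemma \ref{lem.Frk} together with the scaling $F_r\le r^{4-5/p}F_1$ yields $F_r[f;(x_0,t_0)]\lesssim r^{\beta}$ with $\beta=4-5/p>2/3$.

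\emph{Step 3: Campanato.} Apply Lemma \ref{lem.ppi} on $Q_{2r}(x_0,t_0)$ with $\vartheta=1$. The required identity \eqref{eqn.ppi} follows from \eqref{eqn.weak} after integrating by parts, since $-u_{xx}\phi_{xx}=u_x\phi_{xxx}$. This gives
\[
\fint_{Q_r(x_0,t_0)}|u-u_{Q_r(x_0,t_0)}|^3\lesssim G_{2r}^3+G_{2r}^6+F_{2r}^3\lesssim r^{3\a'},
\]
where $\a'=\min\{2\a,\beta\}$; we then rename $\a$ and may assume $\a<1$. The resulting bound is uniform over $(x_0,t_0)\in Q_{1/4}$ and $r$ small. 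Lemma \ref{lem.Campanato}, applied after rescaling $Q_{1/2}$ to $Q_1$ (or directly on $Q_{1/4}$ with the obvious adjustment of radii), then gives $u\in C^{\a,\a/4}(Q_{1/4})$.

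The main obstacle is Step 1: ensuring that the translated and rescaled smallness condition still lies below the threshold $\d_0$ of the contraction lemma. This is exactly why the hypothesis carries the factor $1/2$ and why the conclusion is stated only on $Q_{1/4}$.
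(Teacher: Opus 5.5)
Your proposal is correct and follows essentially the same route as the paper. You re-center at $(x_0,t_0)\in Q_{1/4}$ with the same dilation by $2$ (your bound $2^{2/3}\cdot\d_0/2<\d_0$ is exactly the paper's, so no adjustment of $\d_0$ is needed), apply the decay lemma, convert to power decay at every point, and conclude via the parabolic Poincar\'e inequality and the Campanato lemma. The only cosmetic difference is that you keep the forcing exponent $4-5/p$ separate and take $\a'=\min\{2\a,4-5/p\}$, whereas the paper uses $\l<\eta_p$ to bound $r^{4-5/p}F_1[f]\ls 2^{-k}\ls r^{\a}$ with the single exponent $\a=-1/\log_2\l$.
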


\begin{proof}
Given $(x_0,t_0)\in Q_{1/4}$, we have $Q_{1/2}(x_0,t_0)\subset Q_1$.
Consider the rescaled functions
\[
\wt u(x,t):=u(2^{-1}x+x_0,2^{-4}t+t_0),\quad
\wt f(x,t):=2^{-4}f(2^{-1}x+x_0,2^{-4}t+t_0),
\]
which satisfy
\[
G_1[\wt u]+F_1[\wt f]^{1/2}\leq2^{2/3}G_1[u]+2^{2/3}F_1[f]^{1/2}\leq\d_0,
\]
where $\d_0$ is the constant from the decay lemma.

By the decay lemma (Lemma \ref{lem.Iterative}),
for any $k\in\Z_{\geq0}$,
\[
G_{\l^k}[\wt u]\leq\frac{1}{4^k}G_1[\wt u]+\frac{k}{4^k}F_1[\wt f]^{1/2}
\leq\frac{1+k}{4^k}\d_0\ls_p\frac{1}{2^k}.
\]
Rescaling back yields
\[
\int_{Q_{\l^k/2}(x_0,t_0)}|u_x|^3\ls_p\l^{2k}2^{-3k}.
\]

For any $0<r<1/2$,
there exists $k\in\Z_{\geq0}$ such that $\l^{k+1}/2\leq r<\l^k/2$,
and thus
\[
2^{-(k+1)}<2^{-\log_{\l}2r}=(2r)^{-\frac{1}{\log_2\l}}\ls_p r^\a,
\quad 0<\a:=-\frac{1}{\log_2\l}<1.
\]
It then follows that
\[
r^3\fint_{Q_r{(x_0,t_0)}}|u_x|^3\ls_p \l^{-2k}
\int_{Q_{\l^k/2}{(x_0,t_0)}}|u_x|^3\ls_p2^{-3k}\ls_p r^{3\a}.
\]
Moreover, since $\l<\eta_p$, we have
\[
r^4\frb{\fint_{Q_r(x_0,t_0)}|f|^p}^{1/p}
\leq r^{4-5/p}F_1[f]
\ls_p \l^{k(4-5/p)}\ls_p 2^{-k}\ls_p r^{\a}.
\]
Therefore, the parabolic Poincar\'e inequality (Lemma~\ref{lem.ppi}) implies
\begin{align*}
\fint_{Q_{r/2}(x_0,t_0)}|u-u_{Q_{r/2}(x_0,t_0)}|^3
\ls_p r^{3\a}+(r^{3\a})^2+r^{3\a}\ls_p r^{3\a}.
\end{align*}
By the Campanato lemma,
we conclude that $u\in C^{\a,\a/4}(Q_{1/4})$.
\end{proof}

\begin{proof}[Proof of Theorem \ref{thm.main-G+F}]
Let $\d_0$ and $\a$ be the constants from Proposition \ref{pro.u-C^a}.
Consider the rescaled functions $u^r(x,t)=u(rx,r^4t)$ and $f^r(x,t)=r^4f(rx,r^4t)$,
which satisfy
\[
G_1[u^r]+F_1[f^r]^{1/2}=G_r[u]+F_r[f]^{1/2}\leq\frac{\d_0}{2}.
\]
By Proposition \ref{pro.u-C^a},
we have $u^r\in C^{\a,\a/4}(Q_{1/4})$,
and hence $u\in C^{\a,\a/4}(Q_{r/4})$.
\end{proof}

\section{Proofs of Theorem \ref{thm.main-U} and Theorem \ref{thm.main-L}}\label{sec.theorem-2}

\subsection{Interpolation inequalities}

This subsection presents two interpolation inequalities
that play a key role in the subsequent analysis of the regularity criteria.

\begin{lemma}[Interpolation inequalities]
For any $0<r\leq1$, it holds that
\begin{align}
\fint_{Q_r}|u|^3&\ls U_r^{9/7}G_r^{3/7}+U_r^{3/2}, \label{eqn.U-G-U}
\\
G_r&\ls U_r^{5/24}L_r^{7/24}+U_r^{1/2}.  \label{eqn.U-L-U}
\end{align}
\end{lemma}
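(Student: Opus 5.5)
The plan is to reduce both inequalities to the unit scale and then apply a one-dimensional Gagliardo--Nirenberg inequality at each fixed time, integrating afterwards in time with Hölder's inequality. First, the quantity $\fint_{Q_r}|u|^3$ is invariant under $u^r(x,t)=u(rx,r^4t)$, and $G_r$, $U_r$, $L_r$ are scale-invariant by the discussion in the introduction. So it suffices to prove both estimates for $r=1$ and then apply them to $u^r$.

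For \eqref{eqn.U-G-U} I will use, for a.e.\ $t\in(-1,1)$, the inequality on the bounded interval $B_1$ (with its lower-order term)
\[
\norm{u(t)}_{L^3(B_1)}\ls\norm{u(t)}_{L^2(B_1)}^{6/7}\norm{u_x(t)}_{L^3(B_1)}^{1/7}+\norm{u(t)}_{L^2(B_1)} .
\]
The exponent $a=6/7$ comes from the scaling relation $\frac13=\frac a2+(1-a)\bigl(\frac13-1\bigr)$. Cubing gives
\[
\norm{u(t)}_{L^3}^3\ls\norm{u(t)}_{L^2}^{18/7}\norm{u_x(t)}_{L^3}^{3/7}+\norm{u(t)}_{L^2}^3 .
\]
I then bound $\norm{u(t)}_{L^2}^2\le U_1$ uniformly in $t$ and integrate over $(-1,1)$. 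Hölder in time gives $\int\norm{u_x}_{L^3}^{3/7}dt\ls(\int_{Q_1}|u_x|^3)^{1/7}\simeq G_1^{3/7}$. The resulting bound is $\fint_{Q_1}|u|^3\ls U_1^{9/7}G_1^{3/7}+U_1^{3/2}$.

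For \eqref{eqn.U-L-U} I will use instead
\[
\norm{u_x(t)}_{L^3(B_1)}\ls\norm{u_{xx}(t)}_{L^2(B_1)}^{7/12}\norm{u(t)}_{L^2(B_1)}^{5/12}+\norm{u(t)}_{L^2(B_1)},
\]
where $b=7/12$ solves $\frac13-1=b\bigl(\frac12-2\bigr)+(1-b)\frac12$. Cubing gives
\[
\norm{u_x(t)}_{L^3}^3\ls\norm{u_{xx}(t)}_{L^2}^{7/4}U_1^{5/8}+U_1^{3/2}.
\]
Hölder in time with exponent $8/7$ yields $\int\norm{u_{xx}}_{L^2}^{7/4}dt\ls(\int_{Q_1}u_{xx}^2)^{7/8}=L_1^{7/8}$. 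Hence $\int_{Q_1}|u_x|^3\ls U_1^{5/8}L_1^{7/8}+U_1^{3/2}$, and taking cube roots gives $G_1\ls U_1^{5/24}L_1^{7/24}+U_1^{1/2}$.

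The only point requiring care is the validity of the Gagliardo--Nirenberg inequalities on the bounded interval $B_1$ rather than on $\R$. There the lower-order $L^2$ term is unavoidable, and it is precisely the source of the additive terms $U^{3/2}$ and $U^{1/2}$. I would justify these inequalities by a standard extension operator from $B_1$ to $\R$ that is bounded simultaneously on $L^2$, $W^{1,3}$ and $H^2$. Everything else is exponent bookkeeping, so I do not expect any further obstacle.
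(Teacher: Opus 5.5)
Your proposal is correct and follows the paper's proof essentially step for step. You reduce to $r=1$ by scale invariance, apply the bounded-interval Gagliardo--Nirenberg inequalities with interpolation exponents $6/7$ and $7/12$ (including the lower-order $L^2$ term), bound $\norm{u(t)}_{L^2}^2\le U_1$, and integrate in time with H\"older. The only difference is that you sketch how to justify Gagliardo--Nirenberg on $B_1$ via an extension operator, absorbing the lower-order parts of the inhomogeneous Sobolev norms, whereas the paper simply cites Leoni's Theorem~7.41.
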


\begin{proof}
Due to the scale-invariance, we may assume that $r=1$.

(1)
By the Gagliardo--Nirenberg inequality (\cite[Theorem 7.41]{L17}),
we get
\[
\norm{u}_{L^3(B_1)}^3\ls
\norm{u_{x}}_{L^3(B_1)}^{3/7}\norm{u}_{L^2(B_1)}^{18/7}+\norm{u}^3_{L^2(B_1)}
\]
Integrating this inequality in time over $(-1,1)$ gives
\begin{align*}
\fint_{Q_1}|u|^3&
\ls\int_{-1}^{1}\norm{u_{x}}_{L^3(B_1)}^{3/7}\norm{u}_{L^2(B_1)}^{18/7}
+\int_{-1}^{1}\norm{u}^3_{L^2(B_1)}
\\
&\ls U_1^{18/14}\frb{\int_{-1}^{1}\norm{u_{x}}_{L^3(B_1)}^3}^{1/7}
+\int_{-1}^{1}\frb{\int_{B_1}u^2}^{3/2}
\\
&\ls U_1^{9/7}G_1^{3/7}+U_1^{3/2}.
\end{align*}
This yields the desired estimate \eqref{eqn.U-G-U}.

(2)
By the Gagliardo--Nirenberg inequality (\cite[Theorem 7.41]{L17}), we have
\[
\norm{u_x}^3_{L^3(B_1)}\ls \norm{u_{xx}}_{L^2(B_1)}^{7/4}\norm{u}_{L^2(B_1)}^{5/4}
+\norm{u}_{L^2(B_1)}^3.
\]
Integrating this inequality in time over $(-1,1)$ gives
\begin{align*}
G_1^3&=\fint_{Q_1}|u_x|^3
\ls\int_{-1}^{1}\norm{u_{xx}}_{L^2(B_1)}^{7/4}\norm{u}_{L^2(B_1)}^{5/4}
+\int_{-1}^{1}\norm{u}^3_{L^2(B_1)}\\
&\ls U_1^{5/8}\frb{\int_{-1}^{1}\norm{u_{xx}}_{L^2(B_1)}^2}^{7/8}+U_1^{3/2}
\ls U_1^{5/8}L_1^{7/8}+U_1^{3/2}.
\end{align*}
Taking the $1/3$-th power of both sides yields \eqref{eqn.U-L-U}.
\end{proof}

\subsection{Regularity criterion in terms of $U_r$}

In this subsection, we aim to establish
the regularity criterion in terms of the quantity $U_r$.
Our proof adapts the approach developed by Wang \cite{W25}
for the Navier--Stokes equations,
which relies on a key nonlinear contraction combining
the local energy estimate with the interpolation inequalities.
This approach allows us to demonstrate
the smallness of the quantity $G_r+F_r^{1/2}$,
thereby ensuring the H\"older continuity of suitable weak solutions to \eqref{eqn.SGM-f},
as stated in Theorem \ref{thm.main-G+F}.

\begin{lemma}[Key nonlinear contraction]\label{lem.key-non-iter}
Let $\eta_p$ be the constant from Lemma~\ref{lem.Frk}.
There exist universal constants $0<\d^*_1=\d_1^*(p)<1$
and $0<\th=\th(p)<\eta_p<1$  such that
if $u$ is a suitable weak solution of \eqref{eqn.SGM-f}
with $f \in L^p(Q_1)$ satisfying $U_1\leq\d^*_1$,
then
\[
G_{\th}\leq\frac{1}{4}G_1+\frac{1}{4}F_1^{1/2}+U_1^{1/4}.
\]
\end{lemma}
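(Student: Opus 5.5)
The plan is to prove the estimate at a fixed small scale directly, by combining the interpolation inequality \eqref{eqn.U-L-U} at scale $\th$ with the local energy estimate \eqref{eqn.local-energy}. Young's inequality then absorbs every term containing $G_1$ or $F_1$ into $\frac14 G_1+\frac14F_1^{1/2}$. What remains is a positive power of $U_1$ times a $\th$-dependent constant; taking $U_1$ small relative to $\th$ bounds it by $U_1^{1/4}$. The order of choices is: fix $\th:=\eta_p/2$ (any fixed value below $\eta_p$ and $1/2$ works), then choose $\d_1^*$ depending on $\th$ and $p$.

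\emph{Step 1 (scale $\th$).} Apply \eqref{eqn.U-L-U} with $r=\th$ to get $G_\th\ls U_\th^{5/24}L_\th^{7/24}+U_\th^{1/2}$. Directly from the definitions, $U_\th\le \th^{-1}U_1$ and $L_\th\le \th^{-1}\int_{Q_{1/2}}u_{xx}^2$, since $\th<1/2$.

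\emph{Step 2 (energy).} Bound $\int_{Q_{1/2}}u_{xx}^2$ by \eqref{eqn.local-energy}:
\[
\int_{Q_{1/2}}u_{xx}^2\ls_p\int_{Q_1}|u|^3+\int_{Q_1}|u_x|^3+\frb{\int_{Q_1}|f|^p}^{3/(2p)}\ls U_1^{9/7}G_1^{3/7}+U_1^{3/2}+G_1^3+F_1^{3/2}.
\]
Here \eqref{eqn.U-G-U} with $r=1$ handles $\int_{Q_1}|u|^3$, and the definitions of $G_1$ and $F_1$ handle the other two terms.

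\emph{Step 3 (combine).} Substituting, and using $(a+b)^{7/24}\le a^{7/24}+b^{7/24}$, we get
\[
G_\th\ls_p \th^{-1/2}\Big(U_1^{5/24}G_1^{7/8}+U_1^{5/24}F_1^{7/16}+U_1^{5/24+3/8}G_1^{1/8}+U_1^{5/24+7/16}\Big)+\th^{-1/2}U_1^{1/2}.
\]
Each mixed term is handled by Young's inequality. For example,
\[
C\th^{-1/2}U_1^{5/24}G_1^{7/8}\le \tfrac18G_1+C_\th U_1^{5/3}
\quad\text{(exponents $8/7$ and $8$)},
\]
and similarly $C\th^{-1/2}U_1^{5/24}F_1^{7/16}\le\frac18F_1^{1/2}+C_\th U_1^{5/3}$. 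The term with $G_1^{1/8}$ is likewise bounded by $\frac18G_1$ plus a positive power of $U_1$. After this, every remaining term has the form $C_\th U_1^{\g}$ with $\g\ge 1/2>1/4$.

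\emph{Step 4 (choice of $\d_1^*$).} Choose $\d_1^*$ so small that $C_\th U_1^{\g-1/4}\le$ (the reciprocal of the number of such terms) whenever $U_1\le\d_1^*$. Then all the $U_1$-terms sum to at most $U_1^{1/4}$, which gives the claim with $\th<\eta_p$.

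The main obstacle I expect is the bookkeeping of Young exponents in Step 3. The $G_1$ and $F_1$ powers must come out strictly below $1$ and $1/2$ respectively, so that they can be absorbed. They do: $7/8<1$, and $7/16<1/2$ in the $F$-variable, which is measured as $F_1^{1/2}$. A secondary concern is that the local energy estimate must hold in the form stated (localized from $Q_1$ to $Q_{1/2}$); this is exactly \eqref{eqn.local-energy}, so no extra rescaling is needed. The contraction lemma is not needed here; it enters only later, once $G_\th+F_\th^{1/2}$ becomes small.
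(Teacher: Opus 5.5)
Your proposal is correct and follows essentially the same route as the paper. You apply \eqref{eqn.U-L-U} at scale $\theta$ using $U_\theta\le\theta^{-1}U_1$ and $L_\theta\le\theta^{-1}\int_{Q_{1/2}}u_{xx}^2$, bound the latter via \eqref{eqn.local-energy} and \eqref{eqn.U-G-U}, and finish with Young's inequality and the smallness of $U_1$, arriving at the same leftover powers $U_1^{2/3}$, $U_1^{31/48}$, $U_1^{5/3}$, $U_1^{1/2}$. The only difference is cosmetic. The paper weights Young's inequality with powers of $r$ and absorbs the $G_1$, $F_1$ terms by taking $\theta$ small. You instead fix $\theta=\eta_p/2$ and use $\epsilon$-Young, pushing all $\theta$-dependence onto the $U_1$-terms; this works equally well since every remaining exponent of $U_1$ exceeds $1/4$.
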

\begin{proof}
By the local energy estimate \eqref{eqn.local-energy}
and the interpolation inequality \eqref{eqn.U-G-U}, we obtain
\[
L_{1/2}
\ls_p\int_{Q_1}|u|^3+\int_{Q_1}|u_x|^3+\frb{\int_{Q_1}|f|^p}^{3/2p}
\ls_p U_1^{9/7}G_1^{3/7}+U_1^{3/2}+G_1^3+F_1^{3/2}.
\]
Combining this with \eqref{eqn.U-L-U} and applying Young's inequality yields
\begin{align*}
G_r&\ls U_r^{5/24}L_r^{7/24}+U_r^{1/2}
\ls r^{-1/2} U_1^{5/24}L_{1/2}^{7/24}+r^{-1/2}U_1^{1/2}
\\
&\ls_p r^{-1/2} U_1^{5/24}(U_1^{9/7}G_1^{3/7}+U_1^{3/2}+G_1^3+F_1^{3/2})^{7/24}+r^{-1/2}U_1^{1/2}
\\
&\ls_p r^{-1}U_1^{7/12} r^{1/2}G_1^{1/8}+r^{-1/2}U_1^{31/48}+r^{-1}U_1^{5/24}r^{1/2}G_1^{7/8}
+r^{-1}U_1^{5/24} r^{1/2}F_1^{7/16}+r^{-1/2}U_1^{1/2}
\\
&\ls_p r^{-8/7}U_1^{2/3}+r^4G_1+r^{-1/2}U_1^{31/48}+r^{-8}U_1^{5/3}+r^{4/7}G_1
+r^{-8}U_1^{5/3}+r^{4/7}F_1^{1/2}+r^{-1/2}U_1^{1/2}
\\
&\leq C(r^4+r^{4/7})G_1+Cr^{4/7}F_1^{1/2}+C(r^{-8/7}U_1^{5/12}
+r^{-1/2}U_1^{19/48}+r^{-8}U_1^{17/12}+r^{-1/2}U_1^{1/4})U_1^{1/4}.
\end{align*}
Choose $r=\th<\min\set{\eta_p,1/2}$ sufficiently small such that $C(\th^4+\th^{4/7})<1/4$,
and then take $\d_1^*>0$ small enough to ensure
\begin{align*}
&\quad\,C(\th^{-8/7}U_1^{5/12}+\th^{-1/2}U_1^{19/48}+\th^{-8}U_1^{17/12}+\th^{-1/2}U_1^{1/4})
\\
&\leq C[\th^{-8/7}(\d_1^*)^{5/12}+\th^{-1/2}(\d_1^*)^{19/48}
+\th^{-8}(\d_1^*)^{17/12}+\th^{-1/2}(\d_1^*)^{1/4}]<1.
\end{align*}
This yields the desired result.
\end{proof}

The following results are direct consequences of scaling and
iteration arguments applied to the above lemma.

\begin{lemma}\label{lem.G-th^k}
Let $\d^*_1$ and $\th$ be the constants from Lemma \ref{lem.key-non-iter}.
If $\sup_{0<r\leq1}U_r\leq\d_1^*$, then
\begin{align}
G_{\th r}&\leq\frac{1}{4}G_r+\frac{1}{4}F_r^{1/2}+U_r^{1/4}
\quad \text{for any}\ 0<r\leq1, \label{eqn.G-th-r}
\\
G_{\th^k}&\leq\frac{1}{4^k}G_1+\frac{k}{4^k}F_1^{1/2}
+\sum_{i=0}^{k-1}\frac{1}{4^{k-1-i}}U_{\th^i}^{1/4}
\quad \text{for any}\ k\in\Z_+. \label{eqn.G-th-k}
\end{align}
\end{lemma}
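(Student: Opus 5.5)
For \eqref{eqn.G-th-r}, fix $0<r\leq1$ and rescale, as in the proof of Theorem \ref{thm.main-G+F}, by setting $u^r(x,t):=u(rx,r^4t)$ and $f^r(x,t):=r^4f(rx,r^4t)$. Then $u^r$ is a suitable weak solution of \eqref{eqn.SGM-f} in $Q_1$ with forcing $f^r\in L^p(Q_1)$; the weak formulation and the local energy inequality are preserved because all terms scale the same way. By scale invariance, $U_1[u^r]=U_r[u]\leq\sup_{0<s\leq1}U_s\leq\d_1^*$. Hence Lemma \ref{lem.key-non-iter} applies to $(u^r,f^r)$ and gives
\[
G_\th[u^r]\leq\tfrac14 G_1[u^r]+\tfrac14 F_1[f^r]^{1/2}+U_1[u^r]^{1/4}.
\]
Since $G_\th[u^r]=G_{\th r}[u]$, $G_1[u^r]=G_r[u]$, $F_1[f^r]=F_r[f]$ and $U_1[u^r]=U_r[u]$, this is exactly \eqref{eqn.G-th-r}.

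For \eqref{eqn.G-th-k}, I will induct on $k\in\Z_+$. The case $k=1$ is \eqref{eqn.G-th-r} with $r=1$. Assume the bound holds for some $k$, and apply \eqref{eqn.G-th-r} with $r=\th^k\leq1$:
\[
G_{\th^{k+1}}\leq\tfrac14 G_{\th^k}+\tfrac14 F_{\th^k}^{1/2}+U_{\th^k}^{1/4}.
\]
Inserting the induction hypothesis, the first term contributes
\[
\frac{1}{4^{k+1}}G_1+\frac{k}{4^{k+1}}F_1^{1/2}+\sum_{i=0}^{k-1}\frac{1}{4^{k-i}}U_{\th^i}^{1/4}.
\]
The last term $U_{\th^k}^{1/4}$ is the $i=k$ summand $4^{-(k-i)}U_{\th^i}^{1/4}$ of the new sum, so together these two pieces give $\sum_{i=0}^{k}4^{-(k-i)}U_{\th^i}^{1/4}$.

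The main point to check is the forcing term. Since $\th<\eta_p$, Lemma \ref{lem.Frk} gives $F_{\th^k}\leq 4^{-2k}F_1$. Therefore $\tfrac14F_{\th^k}^{1/2}\leq 4^{-(k+1)}F_1^{1/2}$, and adding it to $\tfrac{k}{4^{k+1}}F_1^{1/2}$ gives $\tfrac{k+1}{4^{k+1}}F_1^{1/2}$. This closes the induction.

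The only genuine subtlety is therefore that $\th<\eta_p$, which Lemma \ref{lem.key-non-iter} guarantees, so that the forcing decays at least like $4^{-2k}$. The rest is bookkeeping of the indices.
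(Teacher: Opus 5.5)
Your proposal is correct and follows the paper's proof. You rescale to $(u^r,f^r)$ and apply Lemma \ref{lem.key-non-iter} to get \eqref{eqn.G-th-r}, then induct with $r=\th^k$, using $\th<\eta_p$ and Lemma \ref{lem.Frk} to absorb the forcing term. The only cosmetic difference is that you induct directly from $k=1$, whereas the paper writes out $k=2$ separately before inducting from $k\geq2$.
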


\begin{proof}
Given $0<r\leq1$,
consider the rescaled functions $u^r(x,t)=u(rx,r^4t)$
and $f^r(x,t)=r^4f(rx,r^4t)$, which satisfy
\begin{align*}
U_1[u^r]=U_r[u]\leq\sup_{0<r\leq1}U_r\leq\d^*_1.
\end{align*}
By lemma \ref{lem.key-non-iter}, we obtain
\[
G_\th[u^r]\leq\frac{1}{4}G_1[u^r]+\frac{1}{4}F_1[f^r]^{1/2}+U_1[u^r]^{1/4}.
\]
Rescaling back gives \eqref{eqn.G-th-r}.

Iterating \eqref{eqn.G-th-r} $k$ times for $\th$ yields \eqref{eqn.G-th-k}.
Choosing $r=1$ in \eqref{eqn.G-th-r} gives the case $k=1$.
For $k=2$, taking $r=\th$ in \eqref{eqn.G-th-r} yields
\[
G_{\th^2}
\leq\frac{1}{4}G_\th+\frac{1}{4}F_\th^{1/2}+U_\th^{1/4}
\leq\frac{1}{4^2}G_1+\frac{2}{4^2}F_1^{1/2}+\sum_{i=0}^{1}\frac{1}{4^{1-i}}U_{\th^i}^{1/4}.
\]
where we used Lemma \ref{lem.Frk} in the second inequality, since $\th<\eta_p$.

Now, assume that \eqref{eqn.G-th-k} holds for some $k\in\Z_{\geq2}$.
We show it also holds for $k+1$:
\begin{align*}
G_{\th^{k+1}}
&\leq\frac{1}{4}G_{\th^k}+\frac{1}{4}F_{\th^k}^{1/2}+U_{\th^k}^{1/4}
\\
&\leq\frac{1}{4^{k+1}}G_1
+\frac{k}{4^{k+1}}F_1^{1/2}
+\sum_{i=0}^{k-1}\frac{1}{4^{k-i}}U_{\th^i}^{1/4}
+\frac{1}{4^{k+1}}F_1^{1/2}+U_{\th^k}^{1/4}
\\
&=\frac{1}{4^{k+1}}G_1+\frac{k+1}{4^{k+1}}F_1^{1/2}
+\sum_{i=0}^{(k+1)-1}\frac{1}{4^{(k+1)-1-i}}U_{\th^i}^{1/4}.
\end{align*}
This completes the proof.
\end{proof}

\begin{proposition}\label{pro.U-delta}
There exist universal constants $\d_1=\d_(p)>0$ and $0<\a=\a(p)<1$
such that if $u$ is a suitable weak solution of \eqref{eqn.SGM-f}
with $f \in L^p(Q_1)$ satisfying
\[
\sup_{0<r\leq1}U_r\leq\d_1,
\]
then $u\in C^{\a,\a/4}(Q_{r_0/4})$ for some $r_0>0$.
\end{proposition}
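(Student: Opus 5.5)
The plan is to reduce Proposition \ref{pro.U-delta} to Theorem \ref{thm.main-G+F}. We exhibit a scale $r_0>0$ with $G_{r_0}+F_{r_0}^{1/2}\le \d_0/2$, where $\d_0$ is the constant from Proposition \ref{pro.u-C^a}. Theorem \ref{thm.main-G+F} then gives $u\in C^{\a,\a/4}(Q_{r_0/4})$, with $\a=\a(p)$ the exponent of that theorem.

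First fix $\d_1\le\d_1^*$ (the constant of Lemma \ref{lem.key-non-iter}), with a further smallness to be chosen. Under $\sup_{0<r\le1}U_r\le\d_1$, Lemma \ref{lem.G-th^k}, namely \eqref{eqn.G-th-k}, gives for every $k\in\Z_+$
\[
G_{\th^k}\le \frac{1}{4^k}G_1+\frac{k}{4^k}F_1^{1/2}+\sum_{i=0}^{k-1}\frac{1}{4^{k-1-i}}\d_1^{1/4}
\le \frac{1}{4^k}G_1+\frac{k}{4^k}F_1^{1/2}+\frac{4}{3}\d_1^{1/4}.
\]

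The finiteness of $G_1$ and $F_1$ comes from the fact that $u$ is a suitable weak solution. From $u\in L^\infty L^2\cap L^2H^2$ and Gagliardo--Nirenberg, as in \eqref{eqn.U-L-U}, we get $u_x\in L^3(Q_1)$, so $G_1<\infty$. Since $f\in L^p$, also $F_1<\infty$. These quantities are not universally bounded, which is why the conclusion only holds for \emph{some} $r_0$ depending on $u$ and $f$.

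Next choose $\d_1$ so small that $\frac43\d_1^{1/4}\le\d_0/8$. Then choose $k$ large, depending on $G_1$ and $F_1$, so that
\[
4^{-k}G_1+k4^{-k}F_1^{1/2}\le\d_0/8,
\qquad
F_{\th^k}^{1/2}\le 4^{-k}F_1^{1/2}\le\d_0/4 .
\]
The bound on $F_{\th^k}$ comes from Lemma \ref{lem.Frk}, using $\th<\eta_p$. Setting $r_0:=\th^k$ gives
\[
G_{r_0}+F_{r_0}^{1/2}\le \d_0/8+\d_0/8+\d_0/4=\d_0/2 .
\]
Theorem \ref{thm.main-G+F}, applied with $r=r_0$, then finishes the proof.

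The main obstacle is a bookkeeping point. The smallness $\d_1$ must be universal, depending only on $p$, while $r_0$ may depend on $u$ and $f$. The accumulated $U$-terms in \eqref{eqn.G-th-k} are bounded by a geometric series uniformly in $k$, so $\d_1$ is indeed independent of $k$. The size of the data is absorbed only through the choice of $k$. I would also double-check that Lemma \ref{lem.G-th^k} is applicable as stated, i.e. that the hypothesis is $\sup_{0<r\le1}U_r\le\d_1^*$ and that $\d_1\le\d_1^*$ has been imposed.
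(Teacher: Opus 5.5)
Your proposal is correct and matches the paper's proof. Both take $\d_1\le\d_1^*$, bound the accumulated $U_{\th^i}^{1/4}$ terms in \eqref{eqn.G-th-k} by a geometric series that is a fixed fraction of $\d_0$, control $F_{\th^k}^{1/2}\le 4^{-k}F_1^{1/2}$ via Lemma \ref{lem.Frk}, choose $k$ depending on $G_1+F_1^{1/2}$, and apply Theorem \ref{thm.main-G+F} at $r_0=\th^k$. The only differences are cosmetic: you require $\frac43\d_1^{1/4}\le\d_0/8$ where the paper takes $\d_1\le\d_0^4/8^4$, and you explicitly check $G_1,F_1<\infty$, which the paper leaves implicit.
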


\begin{proof}
Let $\d_0$ and $\d^*_1$ be the constants from Theorem \ref{thm.main-G+F}
and Lemma \ref{lem.G-th^k}, respectively.
Set
\[
\d_1:=\min\set{\d_0^4/8^4,\d^*_1}.
\]
Since $\sup_{0<r\leq1}U_r\leq\d_1\leq\d_1^*$,
it follows from \eqref{eqn.G-th-k} that
\[
G_{\th^k}\leq\frac{1}{4^k}G_1
+\frac{k}{4^k}F_1^{1/2}
+\sum_{i=0}^{k-1}\frac{1}{4^{k-1-i}}U_{\th^i}^{1/4}.
\]
Hence
\begin{align*}
G_{\th^k}+F_{\th^k}^{1/2}
&\leq\frac{1}{4^k}G_1+\frac{k}{4^k}F_1^{1/2}
+\sum_{i=0}^{k-1}\frac{1}{4^{k-1-i}}U_{\th^i}^{1/4}
+\frac{1}{4^k}F_1^{1/2}
\\
&\leq\frac{1}{4^k}G_1
+\frac{k+1}{4^k}F_1^{1/2}+\frac{\d_0}{8}\frb{1+\frac{1}{4}+\cdots+\frac{1}{4^{k-1}}}
\\
&\leq \frac{1}{2^k}G_1+\frac{1}{2^k}F_1^{1/2}+\frac{\d_0}{4},
\end{align*}
where we used Lemma \ref{lem.Frk} in the first inequality,
and $\sup_{0<r\leq1}U_r\leq\d_1\leq\d_0^4/8^4$ in the second inequality.

Therefore, for sufficiently large
$k=k_0:=\big\lceil\log_2\frb{(G_1+F_1^{1/2})/\d_0}\big\rceil+3$,
it holds that
\[
G_{\th^{k_0}}+F_{\th^{k_0}}^{1/2}
\leq\frac{\d_0}{8}+\frac{\d_0}{8}+\frac{\d_0}{4}=\frac{\d_0}{2}.
\]
By Theorem \ref{thm.main-G+F},
we have $u\in C^{\a,\a/4}(r_0/4)$,
where $r_0:=\th^{k_0}\leq\th^3\frb{(G_1+F_1^{1/2})/\d_0}^{\ln\th/\ln2}$.
\end{proof}

Next, we present the proof of Theorem \ref{thm.main-U}.

\begin{proof}[Proof of Theorem \ref{thm.main-U}]
Let $\d_1$ be the constant from Proposition~\ref{pro.U-delta}.
Note that $0<r/\rho\leq1$.
Consider the rescaled function $u^\rho(x,t)=u(\rho x,\rho^4t)$, which satisfies
\[
U_\frac{r}{\rho}[u^\rho]=U_r[u]\leq\sup_{0<r\leq\rho}U_r\leq\d_1.
\]
By Proposition~\ref{pro.U-delta},
we have $u^\rho\in C^{\a,\a/4}(Q_{r_0/4})$,
and hence $u\in C^{\a,\a/4}(Q_{\rho r_0/4})$.
\end{proof}

\subsection{Regularity criterion in terms of $L_r$}

This subsection is devoted to proving the regularity criterion in terms of the quantity $L_r$.
We emphasize that this criterion is formulated on the domain
$Q_1\subset\mathbb T\times\R$,
since the following interpolation inequality is required:
\begin{equation}\label{eqn.G-O-L}
G_r \ls O_r^{5/24}L_r^{7/24} \quad \text{for any}\ \, 0<r\leq1.
\end{equation}
The above inequality was proved in \cite[Lemma 5.1]{OR19} for the spatially periodic domain,
and its proof relies essentially on the Fourier expansion of $u$.
This is precisely why the periodic spatial setting is necessary for our argument.

Following the same argument as in the proof of \eqref{eqn.U-L-U}
but replacing $u$ by $u-u_{B_r}$, we only obtain
\[
G_r\ls O_r^{5/24}L_r^{7/24}+O_r^{1/2}.
\]
However,
this weaker inequality is insufficient for our subsequent analysis.

\begin{lemma}
Let $\eta_p$ be the constant from Lemma~\ref{lem.Frk}.
There exist universal constants $0<\d^*_2=\d_2^*(p)<1$ and $0<\th=\th(p)<\eta_p<1$
such that if $u$ is a suitable weak solution of \eqref{eqn.SGM-f}
with $f\in L^p(Q_1)$ satisfying $L_1<\d^*_2$,
then
\[
G_{\th}\leq\frac{1}{4}G_1+\frac{1}{4}F_1^{1/2}+L_1^{1/4}.
\]
\end{lemma}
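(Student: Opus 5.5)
The plan is to repeat the proof of Lemma~\ref{lem.key-non-iter}, with $U$ replaced by $O$ and with the periodic interpolation inequality \eqref{eqn.G-O-L} in place of \eqref{eqn.U-L-U}. The key difference is that $O_r$ is not assumed small. So it must itself be estimated through the local energy inequality, and the smallness must come entirely from $L_1<\d_2^*$.

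\emph{Step 1: remove the mean and localize the energy.} Since $O_r[u]=O_r[u-a]$ and $G_r[u]=G_r[u-a]$ for every constant $a$, I would work with $u-a$, where $a:=u_{Q_{1/2}}$. By \eqref{eqn.local-u-a}, the function $u-a$ satisfies the same local energy inequality. Applying the local energy estimate \eqref{eqn.local-energy} on the cylinder $Q_{1/2}$ (after rescaling) gives
\[
\sup_{(-1/4^4,1/4^4)}\int_{B_{1/4}}|u-a|^2\ls_p\int_{Q_{1/2}}\bigl(|u-a|^3+|u_x|^3\bigr)+F_1^{3/2}.
\]
The parabolic Poincar\'e inequality (Lemma~\ref{lem.ppi} with $\vartheta=1$, used at the scale where it controls $Q_{1/2}$) bounds $\int_{Q_{1/2}}|u-a|^3$ by $G_1^3+G_1^6+F_1^3$. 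Lemma~\ref{lem.mean-int} then yields, for $r\le 1/4$,
\[
O_r\ls_p r^{-1}\bigl(G_1^3+G_1^6+F_1^{3/2}+F_1^3\bigr).
\]

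\emph{Step 2: interpolate and apply Young's inequality.} Combining \eqref{eqn.G-O-L} with $L_r\le r^{-1}L_1$ and Step 1 gives
\[
G_r\ls_p r^{-1/2}\bigl(G_1^{5/8}+G_1^{5/4}+F_1^{5/16}+F_1^{5/8}\bigr)L_1^{7/24}.
\]
Each term is then split by Young's inequality, exactly as in Lemma~\ref{lem.key-non-iter}:
\[
r^{-1/2}G_1^{5/8}L_1^{7/24}\le \tfrac18 G_1+C r^{-4/3}L_1^{7/9},
\qquad
r^{-1/2}F_1^{5/16}L_1^{7/24}\le \tfrac18F_1^{1/2}+C r^{-4/3}L_1^{7/9}.
\]
Since $L_1^{7/9}=L_1^{1/4}L_1^{19/36}$, choosing $\d_2^*$ small (after fixing $r=\th<\min\{\eta_p,1/4\}$) makes the remainder at most $L_1^{1/4}$. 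Here, unlike in Lemma~\ref{lem.key-non-iter}, no power of $r$ multiplies $G_1$, so the coefficient $1/4$ comes from Young's inequality rather than from smallness of $\th$. Hence $\th$ only needs to satisfy $\th<\eta_p$, and $\d_2^*$ is then chosen depending on $\th$.

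\emph{Main obstacle: the superlinear terms $G_1^{5/4}L_1^{7/24}$ and $F_1^{5/8}L_1^{7/24}$.} These come from $\vartheta\,G_1^6$ in Poincar\'e and from the $F^3$ term, and they are not directly absorbable when $G_1$ or $F_1$ is large. I would first split into cases. If $G_1+F_1^{1/2}\le1$, these terms are dominated by the linear-order ones above, and the argument closes. If $G_1+F_1^{1/2}>1$, I would instead use the trivial bound together with the smallness of $L_1$: by \eqref{eqn.G-O-L} at scale $\th$,
\[
G_\th\ls O_\th^{5/24}L_\th^{7/24}\ls \th^{-1/2}O_1^{5/24}L_1^{7/24},
\]
and I would try to bound $O_1$ by the energy in terms of $G_1,F_1$ with exponent low enough that $\th^{-1/2}(\cdot)^{5/24}L_1^{7/24}\le\frac14(G_1+F_1^{1/2})$ once $G_1+F_1^{1/2}\ge1$. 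If the $G_1^6$ term still causes trouble, the fallback is to avoid the $\vartheta$-term altogether by estimating the cubic term $u_x^2(u-a)\vp_{xx}$ in the energy inequality with H\"older's inequality in $L^3$ directly, keeping only linear-in-$G_1^3$ contributions.
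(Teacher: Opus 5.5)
\textbf{Gap: Step~1 gives a bound of too high degree, and the case $G_1+F_1^{1/2}>1$ is never closed.} Your architecture matches the paper's: subtract $a=u_{Q_{1/2}}$, bound $O_{1/4}$ through the local energy inequality and the parabolic Poincar\'e inequality, then combine \eqref{eqn.G-O-L}, $L_r\le r^{-1}L_1$ and Young. But Step~1 is carried out in a way that cannot close. The large case cannot be avoided: the lemma is iterated starting from scale $1$, where nothing makes $G_1$ or $F_1$ small.

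By going through \eqref{eqn.local-energy}, you split $u_x^2(u-a)\vp_{xx}$ and $f(u-a)\vp$ with Young's inequality. This puts $\int_{Q_{1/2}}|u-a|^3$ linearly on the right, and Poincar\'e turns it into $G_1^6+F_1^3$. Count $G$ as degree $1$ and $F$ as degree $2$, which is the scaling of the problem. Then your bound for $O_{1/4}$ has degree $6$, so $O_{1/4}^{5/24}$ has degree $5/4>1$. No choice of $\th$ and $\d_2^*$ can absorb it into $\frac14(G_1+F_1^{1/2})$.

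Your proposed remedy for the large case does not help. $O_1$ is not controlled by the local energy inequality at all, since the test functions must be compactly supported in $Q_1$. Estimating an interior $O$ as in your Step~1 just reproduces the degree-$6$ bound. Your small-data case also has a problem, because \eqref{eqn.local-energy} is not scale-consistent. For the exact solution $u=\ve x$, $f=0$, one has $G_1=\ve$ and $O_{1/4}=\ve^2/24$, which is not $\ls\ve^3$ as your Step~1 bound claims.

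\textbf{The fix.} Estimate every term of the local energy inequality for $u-a$ by H\"older rather than Young:
\begin{itemize}
\item $\int_{Q_{1/2}}|u-a|^2\ls(\int_{Q_{1/2}}|u-a|^3)^{2/3}$;
\item $\int_{Q_{1/2}}u_x^2\ls G_1^2$;
\item $\int u_x^2|u-a|\le\norm{u_x}_{L^3}^2\norm{u-a}_{L^3}$;
\item $\int|f||u-a|\le\norm{f}_{L^{3/2}}\norm{u-a}_{L^3}$, with $\norm{f}_{L^{3/2}(Q_{1/2})}\ls F_1$ since $p>3/2$.
\end{itemize}
Combined with Poincar\'e, this gives
\[
O_{1/4}\ls_p G_1^2+G_1^3+G_1^4+G_1F_1+G_1^2F_1+F_1^2 .
\]
Every term has degree between $2$ and $4$, so each piece of $O_{1/4}^{5/24}$ has degree $d\in[5/12,5/6]$, with $d<1$. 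Each resulting term $r^{-1/2}(\cdots)L_1^{7/24}$ then splits by Young into a small multiple of $G_1$ and $F_1^{1/2}$ plus $L_1^{(7/24)/(1-d)}$. The exponent on $L_1$ is at least $1/2>1/4$, and this works for all sizes of $G_1$ and $F_1$ at once, with no case split. This is how the paper proceeds.

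Your fallback points in this direction, but only for the cubic term. The forcing term needs the same treatment, since your $F_1^{3/2}$ came paired with $\int|u-a|^3$, i.e.\ $F_1^3$. Also, what survives is not linear in $G_1^3$: it includes $G_1^4$ and $G_1^2F_1$, which are admissible precisely because of the degree count. Once this is fixed, your observation that the factor $\frac14$ can come from Young rather than from a small $\th$ is fine.
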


\begin{proof}
For any $0<r\leq1/4$, we have
\[
O_r
=\sup_{(-r^4,r^4)}\frac{1}{r}\int_{B_r}|u-u_{B_r}|^2
\leq\sup_{(-r^4,r^4)}\frac{1}{r}\int_{B_r}|u-u_{B_{1/4}}|^2
\ls\frac{1}{r}O_{1/4},
\]
where we used Lemma \ref{lem.mean-int} in the first inequality.

Since $u-u_{Q_{1/2}}$ remains a suitable weak solution of \eqref{eqn.SGM-f},
the local energy estimate yields
\begin{align*}
O_{1/4}
&=\sup_{(-1/4^4,1/4^4)}\int_{B_{1/4}}|u-u_{B_{1/4}}|^2
\ls\sup_{(-1/4^4,1/4^4)}\int_{B_{1/4}}|u-u_{Q_{1/2}}|^2
\\
&\ls\int_{Q_{1/2}}|u-u_{Q_{1/2}}|^2+\int_{Q_{1/2}}|u_x|^2
+\int_{Q_{1/2}}|u_x|^3+\int_{Q_{1/2}}u_x^2|u-u_{Q_{1/2}}|+\int_{Q_{1/2}}f|u-u_{Q_{1/2}}|
\\
&\ls\frb{\int_{Q_{1/2}}|u-u_{Q_{1/2}}|^3}^{2/3}+G_1^2+G_1^3
+\frb{\int_{Q_{1/2}}|u_x|^3}^{2/3}\frb{\int_{Q_{1/2}}|u-u_{Q_{1/2}}|^3}^{1/3}
\\
&\quad\,+\frb{\int_{Q_{1/2}}|f|^{3/2}}^{2/3}\frb{\int_{Q_{1/2}}|u-u_{Q_{1/2}}|^3}^{1/3}
\\
&\ls_p(G_1^3+G_1^6+F_1^3)^{2/3}+G_1^2+G_1^3+G_1^2\frb{G_1^3+G_1^6+F_1^3}^{1/3}
+F_1\frb{G_1^3+G_1^6+F_1^3}^{1/3}
\\
&\ls_p G_1^2+G_1^3+G_1^4+F_1^2+G_1^2F_1+G_1F_1\ls G_1^4+F_1^2+G_1^2F_1.
\end{align*}
where we used Lemma \ref{lem.mean-int} in the first inequality,
the fourth inequality follows from the parabolic Poincar\'e inequality,
and, for simplicity,
only the highest powers of $G_1$ and $F_1$ are retained in the last step.

Similarly,
\[
L_r=\frac{1}{r}\int_{Q_r}u_{xx}^2\leq\frac{1}{r}\int_{Q_1}u_{xx}^2=\frac{1}{r}L_1.
\]
Combining these estimates with \eqref{eqn.G-O-L} yields
\begin{align*}
G_r
&\ls O_r^{5/24}L_r^{7/24}\ls r^{-1/2}O_{1/4}^{5/24}L_1^{7/24}
\ls_pr^{-1/2}(G_1^4+F_1^2+G_1^2F_1)^{5/24}L_1^{7/24}
\\
&\ls_p r^{1/2}G_1^{5/6}r^{-1}L_1^{7/24}
+r^{1/2}F_1^{5/12}r^{-1}L_1^{7/24}+r^{-1/2}G_1^{5/12}F_1^{5/24}L_1^{7/24}
\\
&\ls_p r^{3/5}G_1+r^{-6}L_1^{7/4}
+r^{6/5}F_1^{1/2}+r^{-6}L_1^{7/4}
+r^{1/2}G_1^{5/6}r^{-1}L_1^{1/4}+r^{1/2}F_1^{5/12}r^{-1}L_1^{1/3}
\\
&\ls_p r^{3/5}G_1+r^{-6}L_1^{7/4}
+r^{6/5}F_1^{1/2}
+r^{3/5}G_1+r^{-6}L_1^{3/2}+r^{3/5}F_1^{1/2}+r^{-6}L_1^2
\\
&\ls_p r^{3/5}G_1+(r^{3/5}+r^{6/5})F_1^{1/2}
+(r^{-6}L_1^{7/4}+r^{-6}L_1^{3/2}+r^{-6}L_1^2)
\\
&\leq Cr^{3/5}G_1+C(r^{3/5}+r^{6/5})F_1^{1/2}
+C(r^{-6}L_1^{3/2}+r^{-6}L_1^{5/4}+r^{-6}L_1^{7/4})L_1^{1/4}.
\end{align*}
Take $r=\th<\min\set{1/4,\eta_p}$ sufficiently small
such that $C(\th^{3/5}+\th^{6/5})<1/4$,
and then choose $0<\d_2^*<1$ small enough to ensure
\[
C(\th^{-6}L_1^{3/2}+\th^{-6}L_1^{5/4}+\th^{-6}L_1^{7/4})<
C(\th^{-6}(\d^*_2)^{3/2}+\th^{-6}(\d^*_2)^{5/4}+\th^{-6}(\d^*_2)^{7/4})<1.
\]
This completes the proof.
\end{proof}

The following result can be derived by the same argument as in Proposition \ref{pro.U-delta}.

\begin{proposition}
There exist universal constants $\d_2:=\min\set{\d_0^4/8^4,\d^*_2}>0$ and $0<\a<1$
such that if $u$ is a suitable weak solution
of \eqref{eqn.SGM-f} with $f\in L^p(Q_1)$ satisfying
\[
\sup_{0<r\leq1}L_r\leq\d_2,
\]
then $u\in C^{\a,\a/4}(Q_{r_0/4})$ for some $r_0>0$.
\end{proposition}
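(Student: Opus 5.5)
We follow the proof of Proposition~\ref{pro.U-delta}, with $U$ replaced by $L$. Let $\d_0$ be the constant from Theorem~\ref{thm.main-G+F}, and let $\d_2^*$, $\th$ be the constants from the preceding lemma, with $\th<\eta_p$. Set $\d_2:=\min\set{\d_0^4/8^4,\d_2^*}$, where we may assume $\d_2<\d_2^*$ by shrinking slightly, since the lemma requires a strict inequality.

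\emph{Scaled contraction.} Fix $0<r\leq1$ and rescale: $u^r(x,t)=u(rx,r^4t)$, $f^r(x,t)=r^4f(rx,r^4t)$. Then $u^r$ is again a suitable weak solution on the periodic domain, and $L_1[u^r]=L_r[u]\leq\d_2<\d_2^*$. The preceding lemma applied to $u^r$, followed by rescaling back, gives
\[
G_{\th r}\leq\frac14G_r+\frac14F_r^{1/2}+L_r^{1/4}\quad\text{for any}\ 0<r\leq1.
\]

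\emph{Iteration.} Induct on $k$ exactly as in Lemma~\ref{lem.G-th^k}, using Lemma~\ref{lem.Frk} to bound $F_{\th^k}^{1/2}\leq4^{-k}F_1^{1/2}$. This yields
\[
G_{\th^k}\leq\frac{1}{4^k}G_1+\frac{k}{4^k}F_1^{1/2}+\sum_{i=0}^{k-1}\frac{1}{4^{k-1-i}}L_{\th^i}^{1/4}.
\]
Since $L_{\th^i}^{1/4}\leq\d_2^{1/4}\leq\d_0/8$, the geometric sum is at most $\frac{4}{3}\cdot\frac{\d_0}{8}\leq\d_0/4$. Adding $F_{\th^k}^{1/2}\leq4^{-k}F_1^{1/2}$ gives
\[
G_{\th^k}+F_{\th^k}^{1/2}\leq2^{-k}(G_1+F_1^{1/2})+\d_0/4.
\]
Here we use $(k+1)/4^k\leq2^{-k}$.

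\emph{Conclusion.} Choose $k_0=\lceil\log_2((G_1+F_1^{1/2})/\d_0)\rceil+3$, or $k_0=3$ if the logarithm is negative. Then $G_{\th^{k_0}}+F_{\th^{k_0}}^{1/2}\leq\d_0/2$. Here $G_1<\infty$ because $u_x\in L^3$ by the Gagliardo--Nirenberg interpolation from $L^\infty L^2\cap L^2H^2$, and $F_1<\infty$ because $f\in L^p$. Theorem~\ref{thm.main-G+F} with $r=r_0:=\th^{k_0}$ then gives $u\in C^{\a,\a/4}(Q_{r_0/4})$.

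The only points needing care are the following. The scaling step must preserve the periodic setting and the validity of \eqref{eqn.G-O-L} at each scale; this holds because \eqref{eqn.G-O-L} is stated for all $0<r\leq1$ and the lemma is used only at unit scale after rescaling. The strict inequality $L_1<\d_2^*$ is handled by the choice of $\d_2$. The rest is routine bookkeeping identical to Proposition~\ref{pro.U-delta}.
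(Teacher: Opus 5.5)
Your proposal is correct and follows exactly the argument the paper intends, namely that of Proposition \ref{pro.U-delta}: you rescale the $L$-version of the contraction lemma to get $G_{\th r}\le\frac14G_r+\frac14F_r^{1/2}+L_r^{1/4}$, iterate as in Lemma \ref{lem.G-th^k} using Lemma \ref{lem.Frk}, bound the geometric sum via $\d_2^{1/4}\le\d_0/8$, and invoke Theorem \ref{thm.main-G+F} at $r_0=\th^{k_0}$. Taking $k_0\ge 3$ is a small improvement over the paper's formula for $k_0$, and you do not need to shrink $\d_2$, because the lemma's proof works verbatim under $L_1\le\d_2^*$.
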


The proof of Theorem \ref{thm.main-L}
is analogous to the preceding case and is therefore omitted.

\section*{Acknowledgments}

This work was  partially supported by the
National Natural Science Foundation of China (Nos.~12171389 and 11801015).

\section*{Conflict of Interest}
The authors declare that there is no conflict of interest.

\section*{Data Availability}
No data were generated or analyzed in this study.

\renewcommand\refname{References}

\end{document}